\documentclass[12pt]{article}

\usepackage[T1]{fontenc}
\usepackage[utf8]{inputenc}
\usepackage{lmodern}
\usepackage{amsmath,amssymb,amsthm,mathtools}
\usepackage{booktabs}
\usepackage{enumitem}
\usepackage{geometry}
\usepackage{microtype}
\usepackage{xcolor}
\usepackage{xurl}
\usepackage[colorlinks=true,linkcolor=blue!45!black,citecolor=blue!45!black,urlcolor=blue!45!black]{hyperref}
\usepackage[nameinlink,noabbrev]{cleveref}

\setlist{itemsep=2pt,topsep=4pt}

\newtheorem{theorem}{Theorem}[section]
\newtheorem{proposition}[theorem]{Proposition}
\newtheorem{lemma}[theorem]{Lemma}
\newtheorem{corollary}[theorem]{Corollary}

\theoremstyle{definition}
\newtheorem{definition}[theorem]{Definition}
\newtheorem{example}[theorem]{Example}

\theoremstyle{remark}

\newcommand{\wt}{\operatorname{wt}}
\newcommand{\dist}{\operatorname{dist}}

\title{Pair-Defensive Silver Colourings of Hypercubes}
\author{Saman Farhat%
    \thanks{The Graduate Center of the City University of New York. Email:
        \texttt{saman.farhat33@gc.cuny.edu}.}%
    \thanks{Amirkabir University of Technology. Email:
        \texttt{saman.farhat@aut.ac.ir}.}
    \and
    Mehrak Shirkhani%
    \thanks{University of Tehran. Email:
        \texttt{mehrakshirkhani@ut.ac.ir}.}}
\hypersetup{pdftitle={Pair-Defensive Silver Colourings of Hypercubes},
    pdfauthor={Saman Farhat, Mehrak Shirkhani}}
\date{}

\begin{document}
\maketitle

\begin{abstract}
    A silver colouring is a proper colouring in which every colour appears in the
    closed neighbourhood of each vertex of a prescribed independent set. This
    local condition suffices when vertices are tested one at a time. We study a
    stronger requirement for simultaneous testing: whenever one or two vertices of
    the independent set are attacked together, each colour must supply distinct
    nearby defenders for them. We call this a \emph{pair-defensive silver
    colouring}. For the hypercube \(Q_d\), with one parity class as the attacked
    set, we prove that the maximum number of colours in a pair-defensive silver
    colouring is at most
    $\left\lfloor\frac{d+3}{2}\right\rfloor$,
    roughly half the ordinary silver-colouring target \(d+1\). We construct
    colourings attaining this bound in four consecutive dimensions around every
    power of two, and we study the structure of the extremal, bound-attaining
    colourings. In each odd critical dimension, we characterize the extremal
    colourings by a partition of the defender parity into regular,
    triangle-free subgraphs of the halved cube. This characterization also has
    a local form in terms of coordinate matchings and a defect coordinate.
\end{abstract}

\section{Introduction}

A silver colouring asks for local colour coverage. In an \(r\)-regular graph,
one wants a proper colouring with \(r+1\) colours such that every colour
appears in the closed neighbourhood of each vertex in a prescribed maximum
independent set. This condition is natural when vertices are tested one at a
time: around every tested vertex, all colours are visible.

This paper studies the situation in which up to two vertices may be attacked
simultaneously. For each possible attack, every colour must have distinct
representatives in the closed neighbourhoods of the attacked vertices; these
representatives serve as defenders of the corresponding types. Local colour
coverage alone is not sufficient for this purpose. The same nearby vertex may
be the only representative of its colour for both attacked vertices, and thus
cannot defend them both at once. Pair defense therefore strengthens the
silver-colouring condition by requiring distinct defenders for simultaneous
attacks.

More generally, an \emph{attack} is a set of at most \(k\) vertices that may
be attacked simultaneously. A defensive silver colouring is a proper
colouring together with a fixed independent set \(I\) of possible attacked
vertices. For every attack \(S\subseteq I\) with \(|S|\le k\), each colour
class must provide distinct defenders for the vertices of \(S\); a defender
may be the attacked vertex itself or one of its neighbours. In other words,
the available defenders of each colour must satisfy Hall's matching condition
for every allowed attack. Most of this paper concerns the first nontrivial
case, \(k=2\), which we call \emph{pair defense}. The formal definition
appears in \cref{def:defensive-silver}.

Our main setting is the hypercube \(Q_d\). Its vertices are the binary words
of length \(d\), with two words adjacent precisely when they differ in one
coordinate. The vertices split into two parity classes according to the parity
of their Hamming weight, and each parity class is a maximum independent set.
We fix one parity class, denoted by \(I\), as the set of possible attacked
vertices. We write \(\sigma_k(Q_d)\) for the largest number of colours in a
proper colouring of \(Q_d\) that is a defensive silver colouring for every
set \(S\subseteq I\) with \(|S|\le k\). The opposite parity class, denoted \(D\), supplies
most defenders, although an attacked vertex may also defend itself.

The classical starting point is the silver-matrix problem, posed as
Problem~4 of the 1997 International Mathematical Olympiad. Mahdian and
Mahmoodian discussed its roots and its connection to defining sets in graph
colourings in \emph{The Roots of an IMO97 Problem} \cite{mahdian2000roots}.
Ghebleh, Goddyn,
Mahmoodian, and Verdian-Rizi developed the graph-theoretic silver-cube
formulation in \emph{Silver Cubes} \cite{ghebleh2008silver}. They studied
silver colourings of Cartesian products of complete graphs, including
hypercubes, using Hamming-code constructions and coding-theoretic bounds. 
Further work includes silver block-intersection graphs of Steiner designs
\cite{ahadi2013silver} and totally silver graphs, where every closed
neighbourhood contains each colour exactly once
\cite{ghebleh2013totally}. None of these works imposes the distinct-defender requirement for simultaneous
attacks that defines defensive silver colourings.

The second source of motivation is defensive domination. Farley and
Proskurowski introduced \(k\)-defensive domination
\cite{farley2004defensive}, where a set of defenders must assign distinct
defenders to every attack of at most \(k\) vertices. Defensive domination on
interval graphs was also studied by
Dereniowski, Gaven{\v c}iak, and Kratochv{\'i}l
\cite{dereniowski2019cops}. Complexity and algorithmic results were obtained
by Ekim, Farley, Proskurowski and coauthors
\cite{ekim2020complexity,ekim2023proper}, and more recently by Varol, Ekim,
and Tan{\i}nm{\i}{\c s} \cite{varol2026benders}. Pair-defensive silver
colourings combine these two ideas by requiring every colour class of a
proper colouring to serve independently as a defensive set.

Our first main result gives a general upper bound on the number of colours
that can be used while preserving pair defense:
\[
\sigma_2(Q_d)\le
\left\lfloor\frac{d+3}{2}\right\rfloor .
\]
This is much smaller than the ordinary silver target \(d+1\). The reduction
is the price of robustness: to defend simultaneous attacks, most colours must
be represented more than once near an attacked vertex. For any \(r\)-regular
graph \(G\) with a maximum independent set \(I\), an ordinary silver colouring
may use \(r+1\) colours; the hypercubes \(Q_{2^t}\) for \(t\ge1\) and
\(Q_{2^t-1}\) for \(t\ge2\) provide examples
\cite[Corollary~5.1]{ghebleh2008silver}. Under
pair defense at the same number of colours, this collapses to
the trivial case of a disjoint union of complete graphs: every connected
component of \(G\) must be \(K_{r+1}\) (\cref{thm:collapse}). Thus pair defense
is genuinely more restrictive even beyond hypercubes.

We call a colouring \emph{extremal} when it attains the pair bound. As in the
classical silver-colouring problem, equality cases reveal the underlying
structure and are central to our study. For every power of two \(p\ge2\), we
construct extremal colourings in the four consecutive dimensions
(\cref{thm:dyadic})
\[
2p-3,\quad 2p-2,\quad 2p-1,\quad 2p.
\]

We focus especially on the odd critical dimension \(d=2q-3\). There, an
extremal \(q\)-colouring has a rigid neighbourhood profile: at every attacked
vertex, the open neighbourhood has colour multiplicities
\[
(0,1,2,\ldots,2),
\]
up to permutation. This profile yields a partition theorem. If \(B_i\) is the
set of defender-parity vertices of colour \(i\), then
\(B_1,\ldots,B_q\) form an ordered near-partition of the defender parity
\(D\). Each nonempty block induces a triangle-free \((q-2)\)-regular
subgraph of the \emph{halved cube} on \(D\), in which two vertices are
adjacent when their Hamming distance is two. Conversely, every such
partition reconstructs the full pair-defensive colouring uniquely
(\cref{thm:odd-partition}). Thus we obtain a complete characterization of
extremal pair-defensive \(q\)-colourings in the \(d=2q-3\) dimension.

The same equality structure has a local coordinate interpretation in odd
dimensions. At each defender vertex, the coordinates are naturally paired up,
except for one uncovered coordinate, called the defect coordinate. These
defect edges form a perfect matching between the two parts of the hypercube.
The local matching condition also characterizes extremal colourings directly
from the defender-side colouring, and a constant defect coordinate gives a
reduction to a colouring of a smaller binary cube
(\cref{cor:local-normal-form,prop:constant-defect}).

The paper is organized as follows. Section~\ref{sec:defensive-silver}
introduces the definitions and Hall formulation. Section~\ref{sec:bound}
proves the pair bound and gives the main constructions.
Sections~\ref{sec:odd-equality} and \ref{sec:local-geometry} develop the equality
theory and local matching geometry. The final section summarizes these
results.

\section{Defensive silver colourings and Hall's formulation}
\label{sec:defensive-silver}

Throughout, let \(G=(V(G),E(G))\) be a finite, nonempty, simple, undirected
graph. For a vertex \(x\in V(G)\), its open neighbourhood is
\[
N(x)=\{y\in V(G):xy\in E(G)\},
\]
and its closed neighbourhood is
\[
N[x]=N(x)\cup\{x\}.
\]
For a set \(X\subseteq V(G)\), define
\[
N[X]=\bigcup_{x\in X}N[x].
\]
The square \(G^2\) is the graph on \(V(G)\) in which two distinct vertices
are adjacent whenever their distance in \(G\) is at most two. We write
\[
[q]=\{1,\ldots,q\}.
\]
If \(c:V(G)\to[q]\) is a colouring, its colour-\(i\) class is
\[
C_i=c^{-1}(i).
\]
A colouring \(c:V(G)\to[q]\) is \emph{proper} if adjacent vertices receive
different colours. A set \(I\subseteq V(G)\) is \emph{independent} if no two
vertices of \(I\) are adjacent.
\begin{definition}\label{def:defensive-silver}
    Let $I\subseteq V(G)$ be an independent set and let $k\ge0$. An
    \emph{attack} is a set $S\subseteq I$ with $|S|\le k$. We say that a
    surjective proper colouring
    $c:V(G)\to[q]$ is an \emph{$(I,k,q)$-defensive silver colouring} if, for
    every colour $i\in[q]$ and every attack $S\subseteq I$, there exists an
    injective map
    \[
    f_{i,S}:S\longrightarrow C_i
    \]
    such that
    \[
    f_{i,S}(x)\in N[x]
    \qquad\text{for every }x\in S.
    \]
    Thus, for each attack, every colour class provides a distinct defender for
    each attacked vertex. The empty attack is allowed and is defended by the
    empty map.
    
    We write $\sigma_k(G,I)$ for the largest number of colours in an
    $(I,k,q)$-defensive silver colouring of $G$.
\end{definition}

If $k>|I|$, the definition does not change when $k$ is replaced by $|I|$,
since every attack is a subset of $I$. Self-defense is allowed, which is why
we use closed neighbourhoods.

We identify the \(d\)-dimensional hypercube with the graph \(Q_d\) on
\(\mathbb F_2^d\), where two vertices are adjacent when they differ in
exactly one coordinate. Here \(\mathbb F_2=\{0,1\}\), and vector addition is
performed modulo \(2\). The graph is bipartite: its two parity classes are
\[
I=\{x\in\mathbb F_2^d:\wt(x)\equiv0\pmod2\},
\qquad
D=\{x\in\mathbb F_2^d:\wt(x)\equiv1\pmod2\},
\]
where \(\wt(x)\) denotes the number of nonzero coordinates of \(x\).
We regard \(I\) as the attacked parity and \(D\) as the defender parity.
Choosing the other parity class as \(I\) gives an equivalent problem, since
the two choices are related by a cube automorphism. We write
\[
\sigma_k(Q_d)=\sigma_k(Q_d,I).
\]
For \(j\in[d]\), let \(e_j\) denote the \(j\)th standard basis vector.
The Hamming distance \(\dist(x,y)\) is the number of coordinates in which
\(x\) and \(y\) differ. 

The following lemma gives an equivalent formulation of the definition in
terms of Hall's theorem \cite[Observation~2]{ekim2023proper}.

\begin{lemma}[Hall formulation]
    \label{lem:hall}
    A colour class $C_i$ defends every attack $S\subseteq I$ with $|S|\le k$ if
    and only if
    \[
    |N[S]\cap C_i|\ge |S|
    \]
    for every such set $S$.
\end{lemma}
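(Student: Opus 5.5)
The plan is to recognise the statement as a direct instance of Hall's marriage theorem, applied separately to each attack. Fix a colour $i$ and an attack $S\subseteq I$ with $|S|\le k$. Form the bipartite graph $H_{i,S}$ with parts $S$ and $C_i$, joining $x\in S$ to $y\in C_i$ exactly when $y\in N[x]$. An injective map $f_{i,S}$ as in \cref{def:defensive-silver} is then the same thing as a matching of $H_{i,S}$ that saturates $S$. Moreover, for every $T\subseteq S$, the neighbourhood of $T$ in $H_{i,S}$ is precisely $N[T]\cap C_i$.

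\emph{Necessity.} Assume $C_i$ defends every attack of size at most $k$, and let $S$ be such an attack. The map $f_{i,S}$ is injective and sends each $x\in S$ into $N[x]\cap C_i\subseteq N[S]\cap C_i$. Its image therefore has $|S|$ elements inside $N[S]\cap C_i$, which gives $|N[S]\cap C_i|\ge|S|$.

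\emph{Sufficiency.} Assume the inequality holds for every $S\subseteq I$ with $|S|\le k$, and fix one such $S$. Every subset $T\subseteq S$ is itself a subset of $I$ of size at most $k$, i.e.\ an allowed attack. The hypothesis therefore gives $|N_{H_{i,S}}(T)|=|N[T]\cap C_i|\ge|T|$ for all $T\subseteq S$, which is exactly Hall's condition in $H_{i,S}$. Hall's theorem then produces a matching saturating $S$, and this matching is the required $f_{i,S}$. The empty attack needs no separate treatment: it is covered trivially by both directions.

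The only point needing care is that Hall's condition must be checked for all subsets of $S$, not just for $S$ itself. This is harmless because the family of allowed attacks is closed under taking subsets. Without that closure the equivalence could fail, so this is the step to state explicitly in the write-up.
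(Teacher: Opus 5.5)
Your proof is correct and follows essentially the same route as the paper: form the bipartite graph between $S$ and $C_i$ with adjacency given by closed neighbourhoods, identify defenses with $S$-saturating matchings, and apply Hall's theorem. You also make explicit that the family of allowed attacks is closed under taking subsets, which the paper's proof uses only implicitly.
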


\begin{proof}
    Fix a colour $i$ and an attack $S$. Consider the bipartite graph with two
    sides $S$ and $C_i$, where a vertex $x\in S$ is adjacent to a
    vertex $z\in C_i$ exactly when $z\in N[x]$. A defense of $S$ by colour $i$
    is precisely a matching in this bipartite graph that saturates $S$.
    
    By Hall's theorem, such a matching exists if and only if
    \[
    |N[T]\cap C_i|\ge |T|
    \]
    for every subset $T\subseteq S$. Therefore colour $i$ defends every attack
    of size at most $k$ if and only if this inequality holds for every
    $S\subseteq I$ with $|S|\le k$.
\end{proof}

For $k=2$, we call the inequalities for attacks of size one and two
\emph{singleton Hall} and \emph{pair Hall}, respectively.
The next lemma reduces the number of attacks that must be checked
\cite[Observation~1]{ekim2023proper}. Recall that
\(G^2\) has the same vertex set as \(G\), with two vertices adjacent whenever
their distance in \(G\) is at most two. For \(S\subseteq V(G)\), we write
\(G^2[S]\) for the subgraph of \(G^2\) induced by the vertices in \(S\).
Thus, \(G^2[S]\) is connected when the vertices of \(S\) can be joined by a
chain whose consecutive vertices are at distance at most two in \(G\). This
is different from requiring \(S\) to be connected in \(G\). If an attack
splits into several \(G^2\)-connected parts, then those parts are sufficiently
far apart that their Hall inequalities combine automatically.

\begin{lemma}[Connected-square reduction]
	\label{lem:connected-square}
	To verify the Hall condition in \cref{lem:hall}, it is enough to check
	nonempty attacks \(S\subseteq I\) for which the induced subgraph
	\(G^2[S]\) is connected.
\end{lemma}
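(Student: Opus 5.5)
The plan is to show that the Hall inequality \(|N[S]\cap C_i|\ge|S|\) for an arbitrary attack \(S\subseteq I\) with \(|S|\le k\) follows from the same inequalities for the \(G^2\)-connected pieces of \(S\). Fix a colour \(i\) and such an \(S\). I would first observe that the empty attack satisfies the inequality trivially. For nonempty \(S\), I would decompose it into the vertex sets \(S_1,\ldots,S_m\) of the connected components of \(G^2[S]\). Each \(S_j\) is a nonempty subset of \(I\) with \(|S_j|\le|S|\le k\), and \(G^2[S_j]\) is connected, so by hypothesis \(|N[S_j]\cap C_i|\ge|S_j|\).

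The key step is disjointness: \(N[S_j]\cap N[S_l]=\emptyset\) for \(j\ne l\). Suppose some vertex \(z\) lies in \(N[x]\cap N[y]\) with \(x\in S_j\) and \(y\in S_l\). Then \(\dist(x,y)\le2\) by the path \(x,z,y\) (or a shorter one if \(z\in\{x,y\}\)). Since \(x\ne y\), they are adjacent in \(G^2\), and both lie in \(S\). This contradicts the fact that they lie in different components of \(G^2[S]\). Hence \(N[S]=\bigsqcup_j N[S_j]\) is a disjoint union.

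Intersecting this disjoint union with \(C_i\) and summing gives
\[
|N[S]\cap C_i|=\sum_{j=1}^m|N[S_j]\cap C_i|\ge\sum_{j=1}^m|S_j|=|S|.
\]
So the Hall inequality of \cref{lem:hall} holds for every attack once it holds for the \(G^2\)-connected ones. The converse direction is trivial, because connected attacks are themselves attacks.

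There is no real obstacle here. The only points needing care are the disjointness argument, which must handle the case where the common vertex \(z\) is one of \(x,y\) (this happens when \(x\) and \(y\) are adjacent; since \(I\) is independent this cannot occur, but the distance bound covers it anyway), and noting that each component still satisfies the size bound \(|S_j|\le k\).
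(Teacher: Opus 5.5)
Your proposal is correct and follows essentially the same argument as the paper: decompose \(S\) into the vertex sets of the components of \(G^2[S]\), show their closed neighbourhoods are pairwise disjoint via the distance-two observation, and sum the component Hall inequalities. Your extra remarks (each component still satisfies \(|S_j|\le k\), the empty attack, and the case \(z\in\{x,y\}\)) are minor points that the paper leaves implicit.
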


\begin{proof}
    Let $S_1,\ldots,S_t$ be the vertex sets of the connected components of
    $G^2[S]$. Suppose that a vertex $z$ lies in both $N[S_a]$ and $N[S_b]$ for
    some $a\ne b$. Then there are vertices $x\in S_a$ and $y\in S_b$ such that
    $z\in N[x]\cap N[y]$. Hence
    \[
    \dist_G(x,y)\le 2,
    \]
    so $x$ and $y$ are adjacent in $G^2$. This contradicts the fact that
    $S_a$ and $S_b$ are distinct components of $G^2[S]$.
    
    Therefore the sets $N[S_1],\ldots,N[S_t]$ are pairwise disjoint. If the Hall
    inequality holds for each component, then
    \[
    |N[S]\cap C_i|
    =
    \sum_{j=1}^t |N[S_j]\cap C_i|
    \ge
    \sum_{j=1}^t |S_j|
    =
    |S|.
    \]
    Thus the Hall inequality also holds for $S$.
\end{proof}

For hypercubes, this disjointness will often allow us to work component by
component within a single parity class. We therefore define the \emph{halved
	cube} on \(D\) to be the graph \(H_d\) with vertex set \(D\), in which two
vertices are adjacent precisely when their Hamming distance in \(Q_d\) is
two. The same construction can be made on \(I\), and we use whichever parity
class is relevant.

We next record how the definition specializes to the classical notion of a
silver colouring when only one vertex is attacked.

\begin{proposition}[The silver endpoint]
    \label{prop:silver-endpoint}
    Let $G$ be an $r$-regular graph, and let $I$ be a maximum independent set.
    An $(I,1,r+1)$-defensive silver colouring is exactly a silver colouring with
    respect to $I$ in the sense of \cite{ghebleh2008silver}.
\end{proposition}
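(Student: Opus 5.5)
The plan is to unfold both definitions and compare them condition by condition. Following \cite{ghebleh2008silver}, a silver colouring of an \(r\)-regular graph \(G\) with respect to a maximum independent set \(I\) is a proper colouring \(c:V(G)\to[r+1]\) such that every colour appears in \(N[x]\) for each \(x\in I\). The degree forces more: \(|N[x]|=r+1\), so for each \(x\in I\) every colour appears in \(N[x]\) exactly once. I will prove the two inclusions separately, using \cref{lem:hall} to turn the defense condition into counting inequalities.

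For the forward direction, start from an \((I,1,r+1)\)-defensive silver colouring \(c\). It is proper by definition. With \(k=1\), the attacks are \(\emptyset\) and the singletons \(\{x\}\) with \(x\in I\). By \cref{lem:hall}, the singleton condition reads \(|N[x]\cap C_i|\ge 1\) for every \(i\in[r+1]\) and every \(x\in I\). This says every colour occurs in \(N[x]\), which is exactly the silver condition.

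For the converse, take a silver colouring \(c:V(G)\to[r+1]\) with respect to \(I\). It is proper, and every colour class meets \(N[x]\) for each \(x\in I\). The empty attack is defended trivially. For a singleton \(\{x\}\), setting \(f_{i,\{x\}}(x)\) to be any vertex of \(C_i\cap N[x]\) gives the required injective map. It remains to check that \(c\) is surjective, as \cref{def:defensive-silver} demands.

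Surjectivity is the only point needing care. Since \(G\) is finite and nonempty, a maximum independent set \(I\) is nonempty. Pick any \(x\in I\); then \(N[x]\) already contains all \(r+1\) colours, so \(c\) is onto \([r+1]\).

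The main (though mild) obstacle is making sure the convention in \cite{ghebleh2008silver} matches this one. That convention fixes the number of colours at \(r+1\) and, in some phrasings, asks that each colour appear exactly once rather than at least once. I will note that under \(r\)-regularity, \(|N[x]|=r+1\), so "at least once for all \(r+1\) colours" and "exactly once" coincide. The two notions therefore agree regardless of which phrasing is taken as the definition.
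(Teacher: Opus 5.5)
Your proposal is correct and follows essentially the same route as the paper: for $k=1$, the defense condition reduces to every colour meeting $N[x]$ for each $x\in I$, and $|N[x]|=r+1$ upgrades this to ``exactly once''. Your explicit surjectivity check via the nonemptiness of $I$ (which the paper leaves under ``the converse is immediate'') is a useful extra detail, and the appeal to \cref{lem:hall} is harmless but unnecessary, since for singleton attacks the definition already amounts to $C_i\cap N[x]\neq\varnothing$.
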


\begin{proof}
    For $k=1$, the only nonempty attacks are single vertices. Thus singleton
    defense says that, for every $x\in I$, each colour appears at least once in
    $N[x]$. Since $G$ is $r$-regular, the closed neighbourhood $N[x]$ has size
    $r+1$. There are also exactly $r+1$ colours, so each colour appears exactly
    once in $N[x]$. This is precisely the silver-colouring condition with
    respect to $I$. The converse is immediate.
\end{proof}

Pair defense at the same number of colours is much more restrictive.

\begin{theorem}[Collapse at $r+1$ colours]
    \label{thm:collapse}
    Let $G$ be an $r$-regular graph, and let $I$ be a maximum independent set.
    If $G$ admits an $(I,2,r+1)$-defensive silver colouring, then every connected
    component of $G$ is a copy of $K_{r+1}$.
\end{theorem}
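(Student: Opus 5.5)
The plan is to extract two structural facts from the colouring: first that closed neighbourhoods of attacked vertices are rainbow, then that they are pairwise disjoint. Maximality of \(I\) then forces each such neighbourhood to be a clique, and forces these cliques to cover \(V(G)\).

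\emph{Step 1 (rainbow neighbourhoods).} Restricting to singleton attacks, exactly as in \cref{prop:silver-endpoint}, each colour appears in \(N[x]\) for every \(x\in I\). Since \(|N[x]|=r+1\) and there are \(r+1\) colours, each colour appears \emph{exactly once} in \(N[x]\).

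\emph{Step 2 (no two attacked vertices share a neighbour).} Suppose \(x\ne y\) in \(I\) have a common neighbour \(z\), and let \(i=c(z)\). By Step 1, \(z\) is the unique vertex of colour \(i\) in \(N[x]\), and also the unique one in \(N[y]\). Hence \(N[\{x,y\}]\cap C_i=\{z\}\). This violates pair Hall (\cref{lem:hall}) for the attack \(S=\{x,y\}\). Since \(I\) is independent, \(x\) and \(y\) are also not adjacent. Therefore the closed neighbourhoods \(N[x]\), \(x\in I\), are pairwise disjoint, and every neighbour of \(x\) has \(x\) as its only neighbour in \(I\).

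\emph{Step 3 (each \(N[x]\) is a clique).} Fix \(x\in I\) and suppose two neighbours \(y,z\in N(x)\) are nonadjacent. By Step 2, neither \(y\) nor \(z\) has a neighbour in \(I\setminus\{x\}\). Hence \((I\setminus\{x\})\cup\{y,z\}\) is an independent set larger than \(I\), contradicting maximality. So \(N[x]\) induces \(K_{r+1}\). Each of its vertices already has degree \(r\) inside this clique, so by regularity \(N[x]\) is an entire connected component of \(G\).

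\emph{Step 4 (covering).} Take any vertex \(v\notin\bigcup_{x\in I}N[x]\). Then \(v\) has no neighbour in \(I\), so \(I\cup\{v\}\) is independent, again contradicting maximality. Thus the cliques \(N[x]\), \(x\in I\), cover \(V(G)\), and every component of \(G\) is a copy of \(K_{r+1}\).

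The only point needing care is Step 2. There, the exact-once property from Step 1 is what makes the shared neighbour the sole defender of its colour for both attacked vertices. The remaining steps are routine exchange arguments using that \(I\) is a maximum independent set. The degenerate case \(r=0\) is trivially consistent with the conclusion.
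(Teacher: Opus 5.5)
Your proof is correct. Its key step is the same as the paper's: rainbow closed neighbourhoods together with pair Hall forbid any vertex from being the unique colour-$i$ defender of two attacked vertices. The paper phrases this as injectivity of the map $f_i$ sending $x\in I$ to its unique colour-$i$ vertex in $N[x]$. Since $I$ is independent, a collision $f_i(x)=f_i(y)$ can only occur at a common neighbour, so this is equivalent to your Step 2.

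Where you differ is in how you obtain the partition of $V(G)$ into the sets $N[x]$. The paper sums $|C_i|\ge|I|$ over all colours to get $|V(G)|\ge(r+1)|I|$. It then compares this with $|V(G)|\le(r+1)|I|$, which comes from maximality and an edge count. Equality then forces each vertex outside $I$ to have exactly one neighbour in $I$. You instead get ``at most one'' directly from Step 2 and ``at least one'' directly from maximality in Step 4, so no counting is needed. The clique-exchange and degree arguments are then the same as the paper's.

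Your route is shorter and purely local. The paper's double count parallels the incidence counting in \cref{prop:regular-pair-bound}. It also yields $|C_i|=|I|$ for every colour as a by-product, which follows equally from your conclusion, since each clique is rainbow.
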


\begin{proof}
    We compare two bounds on $|V(G)|$. Fix a colour $i$. By singleton defense,
    each vertex $x\in I$ has at least one colour-$i$ vertex in $N[x]$. Since
    $G$ is $r$-regular and there are $r+1$ colours, every colour appears exactly
    once in each closed neighbourhood $N[x]$. Let $f_i(x)$ be the unique
    colour-$i$ vertex in $N[x]$.
    
    We claim that $f_i$ is injective. Indeed, if $f_i(x)=f_i(y)$ for distinct
    vertices $x,y\in I$, then the attack $\{x,y\}$ would have only one available
    defender of colour $i$, contradicting pair Hall. Hence $|C_i|\ge |I|$ for
    every colour $i$, and therefore
    \[
    |V(G)|=\sum_{i=1}^{r+1}|C_i|\ge (r+1)|I|.
    \]
    
    On the other hand, since $I$ is maximum, it is maximal. Thus every vertex in
    $V(G)\setminus I$ has at least one neighbour in $I$. Counting the edges from
    $I$ to $V(G)\setminus I$ gives
    \[
    |V(G)|-|I|\le r|I|,
    \]
    or equivalently
    \[
    |V(G)|\le (r+1)|I|.
    \]
    Together, the two bounds force equality throughout.
    
    In particular, every vertex outside $I$ has exactly one neighbour in $I$.
    Thus the closed neighbourhoods $N[x]$, for $x\in I$, form a partition of
    $V(G)$.
    
    It remains to identify each part. Fix $x\in I$. If two vertices in $N(x)$
    were nonadjacent, then replacing $x$ in $I$ by those two vertices would give
    a larger independent set.
    
    Finally, every vertex in this clique already has its full degree $r$ inside
    the clique. Therefore there are no edges between different parts of the
    partition. Hence every connected component of $G$ is a copy of $K_{r+1}$.
\end{proof}

The maximum-independent-set hypothesis in \cref{thm:collapse} is essential.
If $I$ is a singleton, then there are no two-vertex attacks to check. There
are also counterexamples with two-vertex attacks: take the cycle $C_6$, let
$I$ consist of two antipodal vertices, and colour the vertices cyclically by
\[
0,1,0,2,1,2.
\]
This gives an $(I,2,3)$-defensive silver colouring, but $C_6$ is not a
disjoint union of triangles. The same idea extends to every $C_n$ for
$n\ge6$: subdivide an edge joining the two disjoint closed neighbourhoods,
colouring each inserted vertex properly with one of the three colours. The
attacked vertices retain their disjoint rainbow closed neighbourhoods.

\section{The pair-defense bound and constructions}
\label{sec:bound}

Fix a parity class $I$ of $Q_d$, and put
\[
D=V(Q_d)\setminus I,
\qquad
m=|I|=|D|=2^{d-1}.
\]
We call $D$ the \emph{defender parity}. Vertices of $I$ may still defend
themselves, since closed neighbourhoods are used.

We begin with two elementary bounds. The second one is useful when large
attacks are allowed.

\begin{proposition}
    \label{prop:elementary-bounds}
    For $d\ge1$ and $k\ge1$, let
    \[
    t=\min\{k,2^{d-1}\}.
    \]
    Then
    \[
    2\le \sigma_k(Q_d)\le
    \min\left\{
    d+1,
    \left\lfloor\frac{2^d}{t}\right\rfloor
    \right\}.
    \]
\end{proposition}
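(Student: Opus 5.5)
We prove the lower bound \(\sigma_k(Q_d)\ge2\) by an explicit colouring, and the two upper bounds by separate counting arguments.

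\textbf{Lower bound.} We use the parity colouring, which gives every vertex of \(I\) colour \(1\) and every vertex of \(D\) colour \(2\). It is proper because \(Q_d\) is bipartite, and it is surjective because \(d\ge1\) makes both classes nonempty. Colour \(1\) defends every attack \(S\subseteq I\) via the identity map \(f_{1,S}(x)=x\), since self-defense is allowed. Colour \(2\) requires an injection \(S\to D\) with \(f(x)\in N(x)\). By \cref{lem:hall} it suffices that \(|N(T)|\ge|T|\) for every \(T\subseteq I\). This holds because the bipartite graph between \(I\) and \(D\) is \(d\)-regular with \(d\ge1\): double counting the edges leaving \(T\) gives \(d|T|\le d|N(T)|\). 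In fact this graph has a perfect matching, so every attack of every size is defended. Hence the parity colouring is an \((I,k,2)\)-defensive silver colouring, and \(\sigma_k(Q_d)\ge2\).

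\textbf{Bound \(d+1\).} Take an \((I,k,q)\)-defensive silver colouring with \(k\ge1\). Since \(I\ne\emptyset\), we may pick a singleton attack \(\{x\}\) with \(x\in I\). Singleton defense forces every one of the \(q\) colours to appear in \(N[x]\). As \(|N[x]|=d+1\), we get \(q\le d+1\).

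\textbf{Bound \(\lfloor 2^d/t\rfloor\).} Set \(t=\min\{k,2^{d-1}\}\), and note \(t\le|I|\). Choose any \(S\subseteq I\) with \(|S|=t\); this is an allowed attack. For each colour \(i\), the injectivity of \(f_{i,S}\) gives \(|C_i|\ge t\). The colour classes partition \(V(Q_d)\), so
\[
2^d=\sum_{i=1}^q|C_i|\ge qt .
\]
Since \(q\) is an integer, \(q\le\lfloor 2^d/t\rfloor\). Combining this with the previous bound gives the stated minimum.

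The argument has no real obstacle. The only point needing care is the Hall verification for the defender colour in the lower bound, which is settled by the regular-bipartite counting above.
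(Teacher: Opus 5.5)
Your proof is correct and follows the paper's argument. The lower bound comes from the parity colouring, the bound $d+1$ from singleton defense, and the bound $\lfloor 2^d/t\rfloor$ from $|C_i|\ge t$ for a size-$t$ attack. The only difference is cosmetic: for the defender colour the paper uses the explicit injection $x\mapsto x+e_1$, which makes your Hall-plus-regular-bipartite counting step unnecessary.
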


\begin{proof}
    Colour the two parity classes of $Q_d$ with two different colours. This gives
    a proper colouring. The colour of $I$ defends attacks by self-defense. The
    colour of $D$ defends attacks by sending each attacked vertex $x\in I$ a defender from $x+e_1\in D$. Thus $\sigma_k(Q_d)\ge2$.
    
    For the first upper bound, singleton Hall implies that every colour must
    appear in the closed neighbourhood of each vertex of $I$. Each such closed
    neighbourhood has size $d+1$, so there can be at most $d+1$ colours.
    
    For the second upper bound, choose an attack $S\subseteq I$ of size
    $t=\min\{k,|I|\}$. By Hall's condition, each colour class $C_i$ satisfies
    \[
    |N[S]\cap C_i|\ge t.
    \]
    In particular, $|C_i|\ge t$ for every colour $i$. If there are $q$ colours,
    then
    \[
    2^d=|V(Q_d)|=\sum_{i=1}^q |C_i|\ge qt,
    \]
    so
    \[
    q\le \left\lfloor\frac{2^d}{t}\right\rfloor .
    \]
\end{proof}

We now prove the main counting bound behind the pair-defense restriction.
The idea is to count, for each attacked vertex, how many times each colour
appears in its closed neighbourhood. As we will see, pair Hall limits how often a colour can
appear exactly once. Notice that $r+1$ is an upper bound that follows from singleton defense. 
We prove another upper bound below.

\begin{theorem}[Regular-graph pair bound]
    \label{prop:regular-pair-bound}
    Let $G$ be an $r$-regular graph, and let $I$ be a nonempty independent set.
    Every $(I,2,q)$-defensive silver colouring satisfies
    \[
    q\le
    \left\lfloor
    \frac{r+1+|V(G)|/|I|}{2}
    \right\rfloor .
    \]
    In particular, if $G$ is bipartite with equal parts and $I$ is one whole
    part, then
    \[
    q\le \left\lfloor\frac{r+3}{2}\right\rfloor .
    \]
\end{theorem}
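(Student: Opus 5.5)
The plan is a double count over the attacked set \(I\). For each \(x\in I\) and each colour \(i\in[q]\), I set \(a_i(x)=|N[x]\cap C_i|\). Singleton Hall (\cref{lem:hall} with \(|S|=1\)) gives \(a_i(x)\ge1\) for every \(i\), and regularity gives \(\sum_i a_i(x)=|N[x]|=r+1\). Let \(s(x)\) be the number of colours \(i\) with \(a_i(x)=1\), which I call the colours that are \emph{singly represented} at \(x\). The other \(q-s(x)\) colours each contribute at least \(2\), so
\[
r+1=\sum_{i=1}^q a_i(x)\ \ge\ s(x)+2\bigl(q-s(x)\bigr)=2q-s(x),
\]
that is, \(2q\le r+1+s(x)\) for every \(x\in I\).

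Next I sum over \(x\in I\) to get \(2q|I|\le (r+1)|I|+\sum_{x\in I}s(x)\). The key step is to show \(\sum_{x\in I}s(x)\le |V(G)|\). To each pair \((x,i)\) with \(a_i(x)=1\), I assign the unique vertex \(z\in N[x]\cap C_i\). I claim this assignment is injective into \(V(G)\).

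Since \(z\) determines \(i=c(z)\), a collision would mean two distinct \(x,y\in I\) for which \(z\) is the only colour-\(i\) vertex in both \(N[x]\) and \(N[y]\). Then \(N[\{x,y\}]\cap C_i=\{z\}\), so the attack \(\{x,y\}\) violates pair Hall in \cref{lem:hall}. Hence the number of singly represented pairs is at most \(|V(G)|\).

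Dividing by \(2|I|\) gives \(q\le \bigl(r+1+|V(G)|/|I|\bigr)/2\), and since \(q\) is an integer the floor follows. In the bipartite case with equal parts and \(I\) one whole part, \(|V(G)|/|I|=2\), which yields \(\lfloor (r+3)/2\rfloor\).

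The only step needing care is the injectivity claim, which is exactly where pair defense enters. It is a direct application of the Hall formulation, so I do not expect a real obstacle. The rest is the arithmetic of the convexity-type inequality \(a_i\ge 2\) for colours that are not singly represented.
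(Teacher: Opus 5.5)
Your proposal is correct and takes essentially the same approach as the paper: the same double count of \(a_i(x)=|N[x]\cap C_i|\) over \(I\), with pair Hall making the map from singleton incidences to their unique defenders injective, so their number is at most \(|V(G)|\). The only cosmetic difference is that you organize the count by attacked vertex, via \(s(x)\), and map injectively into \(V(G)\) directly, whereas the paper bounds \(s_i\le|C_i|\) colour by colour before summing.
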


\begin{proof}
    For $x\in I$ and $i\in[q]$, define
    \[
    a_i(x)=|N[x]\cap C_i|.
    \]
    By singleton Hall, we have $a_i(x)\ge1$ for every $x\in I$ and every
    colour $i$.
    
    Call the pair $(x,i)$ a \emph{singleton incidence} if $a_i(x)=1$. Let $s_i$
    be the number of singleton incidences of colour $i$, and put
    \[
    S=\sum_{i=1}^q s_i.
    \]
    For each singleton incidence $(x,i)$, there is a unique vertex of colour
    $i$ in $N[x]$; call it the unique defender of $x$ in colour $i$.
    
    We claim that, for each fixed colour $i$, these unique defenders are all
    distinct. Indeed, if two different vertices $x,y\in I$ had the same unique
    colour-$i$ defender, then
    \[
    |N[\{x,y\}]\cap C_i|=1,
    \]
    contradicting pair Hall. Therefore
    \[
    s_i\le |C_i|
    \]
    for each colour $i$, and hence
    \[
    S=\sum_{i=1}^q s_i\le \sum_{i=1}^q |C_i|=|V(G)|.
    \]
    
    Now count incidences between closed neighbourhoods of vertices in $I$ and
    colour classes. Since $G$ is $r$-regular,
    \[
    \sum_{x\in I}\sum_{i=1}^q a_i(x)
    =
    \sum_{x\in I}|N[x]|
    =
    |I|(r+1).
    \]
    For a fixed $x$, every colour contributes at least one. Singleton incidences
    contribute exactly one, while all other colours contribute at least two.
    Thus
    \[
    \sum_{x\in I}\sum_{i=1}^q a_i(x)
    \ge
    2q|I|-S.
    \]
    Using $S\le |V(G)|$, we obtain
    \[
    |I|(r+1)
    \ge
    2q|I|-|V(G)|.
    \]
    Rearranging gives
    \[
    q\le
    \frac{r+1+|V(G)|/|I|}{2}.
    \]
    Since $q$ is an integer, the stated bound follows.
    
    If $G$ is bipartite with equal parts and $I$ is one whole part, then
    $|V(G)|/|I|=2$, giving
    \[
    q\le \left\lfloor\frac{r+3}{2}\right\rfloor .
    \]
\end{proof}

\begin{corollary}[Pair bound]
\label{thm:pair-bound}
For every $d\ge1$ and every $k\ge2$,
\[
\sigma_k(Q_d)\le
\left\lfloor\frac{d+3}{2}\right\rfloor .
\]
\end{corollary}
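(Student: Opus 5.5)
The plan is to derive the corollary directly from \cref{prop:regular-pair-bound}, plus a monotonicity observation in $k$. First I would handle the case $k=2$. The hypercube $Q_d$ is $d$-regular and bipartite, and its two parity classes $I$ and $D$ both have size $2^{d-1}$. Since $I$ is one whole part, the second clause of \cref{prop:regular-pair-bound} applies with $r=d$ to any $(I,2,q)$-defensive silver colouring. This gives $q\le\lfloor (d+3)/2\rfloor$, and hence $\sigma_2(Q_d)\le\lfloor(d+3)/2\rfloor$.

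Next I would pass to general $k\ge2$. By \cref{def:defensive-silver}, an $(I,k,q)$-defensive silver colouring must defend every attack $S\subseteq I$ with $|S|\le k$. When $k\ge2$, this family of attacks includes all attacks of size at most $2$. So every $(I,k,q)$-defensive silver colouring is also an $(I,2,q)$-defensive silver colouring, and therefore $\sigma_k(Q_d)\le\sigma_2(Q_d)$.

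One point needs checking: whether the proof of \cref{prop:regular-pair-bound} uses pair Hall in a way that fails when $|I|=1$. This happens only for $d=1$, where $I$ is a single vertex. In that case the unique-defender injectivity claim in that proof is vacuous, so the argument still goes through. As a sanity check, the bound gives $\lfloor 4/2\rfloor=2$, which matches the trivial bound $d+1=2$ from \cref{prop:elementary-bounds}. Also, if $k>|I|$, the definition effectively replaces $k$ by $|I|$, which is harmless here because the inclusion of attack families still holds.

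I do not expect a real obstacle. The only care needed is to confirm that the hypotheses of \cref{prop:regular-pair-bound} (regularity, nonempty independent $I$, and $|V(G)|/|I|=2$) hold for $Q_d$ with the fixed parity class, and to state the monotonicity in $k$ explicitly.
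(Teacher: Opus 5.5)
Your proposal is correct and matches the paper's proof. Both arguments note that any $(I,k,q)$-defensive silver colouring with $k\ge2$ is in particular pair-defensive, and then apply \cref{prop:regular-pair-bound} to $Q_d$ with $r=d$ and $|V(Q_d)|/|I|=2$; your check of the degenerate case $d=1$, which the paper leaves implicit, is sound.
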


\begin{proof}
For $k\ge2$, every $(I,k,q)$-defensive silver colouring is, in particular,
pair-defensive. We may therefore apply \cref{prop:regular-pair-bound} to
$Q_d$, with $r=d$ and
\[
|V(Q_d)|/|I|=2.
\]
This gives
\[
q\le \left\lfloor\frac{d+3}{2}\right\rfloor .
\]
\end{proof}

We shall need the equality conditions in the proof of
\cref{prop:regular-pair-bound}. Retain the notation
\[
a_i(x)=|N[x]\cap C_i|,
\]
let $s_i$ be the number of singleton incidences of colour $i$, and let
\[
S=\sum_{i=1}^q s_i.
\]
For a colouring of $Q_d$, with $m=|I|=|D|$, the counting argument gives
\[
m(d+1)
=
\sum_{x\in I}\sum_{i=1}^q a_i(x)
\ge
S+2(qm-S)
=
2qm-S
\ge
2qm-2m.
\]
When \(d=2q-3\) and a \(q\)-colouring attains the pair bound, we have
\(m(d+1)=2qm-2m\), so both inequalities in the counting argument are
equalities. In this odd critical dimension, every colour that appears more
than once in a closed neighbourhood appears exactly twice. Moreover, the
singleton incidences account for all \(2|I|\) vertices: the injection that
assigns each singleton incidence to its unique defender is therefore a
bijection. Properness already implies that each \(x\in I\) is the unique
defender of its own colour in \(N[x]\). Consequently, every vertex of the
opposite parity class \(D\) is the unique defender of some attacked vertex in
\(I\). In the even critical dimension \(d=2q-2\), attaining the rounded bound
does not force equality in these inequalities.

Below we record two construction mechanisms. The first is a linear construction on
the whole cube. Here the colours are the elements of $\mathbb F_2^h$.

\begin{theorem}[Binary-linear criterion]
	\label{thm:linear-criterion}
	Let \(h\ge1\), put \(q=2^h\), and assign a label
	\[
	\lambda_j\in\mathbb F_2^h
	\]
	to each coordinate \(j\in[d]\). Define
	\[
	c(x)=\sum_{j=1}^d x_j\lambda_j,
	\qquad x=(x_1,\ldots,x_d)\in Q_d.
	\]
	Then \(c\) is proper, surjective, and pair-defensive with respect to either
	parity class if and only if the following conditions hold:
	
	\begin{enumerate}[label=\textup{(\roman*)}]
		\item no label \(\lambda_j\) is zero;
		\item every nonzero vector of \(\mathbb F_2^h\) occurs among the labels;
		\item at most one nonzero vector occurs exactly once.
	\end{enumerate}
	
	Consequently, within this linear construction, the minimum possible
	dimension is
	\[
	d= 1+ 2(q-2) = 2q - 3.
	\]
\end{theorem}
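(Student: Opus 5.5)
We verify the three conditions one at a time, using the translation rule $c(x+e_j)=c(x)+\lambda_j$ that linearity gives. The argument will not use which parity class is attacked, so it covers either choice at once.

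\emph{Properness and singleton defense.} The colouring is proper if and only if $c(x)\ne c(x+e_j)$ for all $x$ and $j$, that is, $\lambda_j\ne0$ for every $j$. This is condition (i).

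Assuming (i), the colours appearing in $N[x]$ are $c(x)$, from $x$ itself, and $c(x)+\lambda_j$ for $j\in[d]$. Hence the multiplicity of colour $i$ in $N[x]$ is
\[
a_i(x)=[\,i=c(x)\,]+\#\{j:\lambda_j=i-c(x)\}.
\]
Singleton Hall says $a_i(x)\ge1$ for all $i$. This holds exactly when $\{0\}\cup\{\lambda_j\}=\mathbb F_2^h$, which is condition (ii). Condition (ii) also forces surjectivity, since the colours of $N[x]$ already exhaust $\mathbb F_2^h$.

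\emph{Pair defense.} By \cref{lem:connected-square} it suffices to check pairs $\{x,y\}\subseteq I$ at distance two, say $y=x+e_a+e_b$ with $a\ne b$. By \cref{lem:hall}, colour $i$ fails for this pair exactly when $N[x]\cap C_i=N[y]\cap C_i$ is a single vertex. That vertex must lie in $N[x]\cap N[y]=\{x+e_a,x+e_b\}$. By symmetry take $z=x+e_a$ and $i=c(z)=c(x)+\lambda_a$.

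Uniqueness of $z$ in $N[x]$ means $i\ne c(x)$ (true by (i)) and that $\lambda_a$ occurs exactly once among the labels. Since $y=z+e_b$, we have $i=c(y)+\lambda_b$. So uniqueness of $z$ in $N[y]$ means that $\lambda_b$ occurs exactly once. Also $\lambda_a\ne\lambda_b$: otherwise $c(y)=c(x)$, and then the distinct vertices $x$ and $y$ both carry colour $i$ and defend separately.

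Hence a failure forces two distinct label vectors that each occur exactly once. Conversely, suppose $\lambda_a\ne\lambda_b$ each occur exactly once. Take any $x\in I$ and $y=x+e_a+e_b$. Reversing the computation above shows that colour $c(x)+\lambda_a$ has $x+e_a$ as its only defender for both $x$ and $y$, so pair Hall fails. This gives condition (iii).

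\emph{The minimum dimension.} Under (i)–(iii), each of the $q-1$ nonzero vectors is a label. At most one of them is used once, and the rest are used at least twice. Therefore
\[
d\ge 1+2(q-2)=2q-3,
\]
and equality is attained by using one nonzero vector once and each of the others exactly twice.

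The main obstacle is the pair-defense step: it must be done carefully so that the single-defender configuration is exactly characterized. In particular one must check that self-defense of $x$ or $y$ never provides the unique shared defender, and handle the case $\lambda_a=\lambda_b$, so that the converse construction is genuinely a counterexample.
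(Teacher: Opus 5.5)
Your proof is correct and takes essentially the same route as the paper: nonzero labels give properness, the presence of every nonzero label gives singleton Hall, and pair Hall reduces to distance-two pairs $\{x,x+e_a+e_b\}$, where a shared unique defender must be $x+e_a$ or $x+e_b$ and forces two distinct labels each used exactly once (you are in fact more explicit than the paper about this direction). One small slip: your stated reason for $\lambda_a\ne\lambda_b$ is wrong, since if $\lambda_a=\lambda_b$ then $x$ and $y$ have colour $c(x)\ne i$, not colour $i$; the inequality instead follows at once from $\lambda_a$ occurring exactly once and $a\ne b$.
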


\begin{proof}
	We first verify singleton defense and then pair defense.
	
	Translate an attacked vertex to \(0\). This translation may exchange the two
	parity classes, which is harmless because the argument is local. Adjacent
	vertices differ by some \(e_j\), and
	\[
	c(x+e_j)-c(x)=\lambda_j.
	\]
	Thus the colouring is proper if and only if no label \(\lambda_j\) is zero,
	which is condition~\textup{(i)}.
	
	At the vertex \(0\), the colours on the open neighbourhood are precisely
	\(\lambda_1,\ldots,\lambda_d\). Hence singleton defense holds if and only if
	every nonzero vector of \(\mathbb F_2^h\) occurs among the labels, giving
	condition~\textup{(ii)}. Since \(c(0)=0\), this condition also implies
	surjectivity.
	
	It remains to analyse pair defense. Two vertices in the same parity class
	have intersecting closed neighbourhoods only when their distance is two.
	After translation, such a pair can be written as
	\[
	\{0,e_r+e_s\}
	\]
	for distinct coordinates \(r\) and \(s\).
	
	Suppose that \(\lambda_r\neq\lambda_s\) and that both labels occur exactly
	once among \(\lambda_1,\ldots,\lambda_d\). We claim that \(e_r\) is the only
	defender of colour \(\lambda_r\) available to the attack
	\(\{0,e_r+e_s\}\).
	
	Indeed, the colours on the open neighbourhood of \(0\) are
	\(\lambda_1,\ldots,\lambda_d\). Since \(\lambda_r\) occurs only once, the
	only vertex of colour \(\lambda_r\) in \(N[0]\) is \(e_r\). Moreover,
	\(e_r\) is adjacent to \(e_r+e_s\), so it is also available to the second
	attacked vertex.
	
	It remains to check that no other vertex in \(N[e_r+e_s]\) has colour
	\(\lambda_r\). A neighbour of \(e_r+e_s\) has the form
	\(e_r+e_s+e_i\) and has colour
	\[
	\lambda_r+\lambda_s+\lambda_i.
	\]
	If this colour were \(\lambda_r\), then
	\[
	\lambda_i=\lambda_s.
	\]
	Because \(\lambda_s\) occurs exactly once, this forces \(i=s\), and the
	corresponding vertex is again \(e_r\). The two attacked vertices themselves
	do not have colour \(\lambda_r\), since all labels are nonzero and
	\(\lambda_r\neq\lambda_s\). Therefore
	\[
	\bigl|N[\{0,e_r+e_s\}]\cap C_{\lambda_r}\bigr|=1,
	\]
	which violates pair Hall. Thus at most one nonzero label can occur exactly once,
	which proves the necessity of condition~\textup{(iii)}.
	
	Conversely, suppose that pair Hall fails for some two-vertex attack. Each
	attacked vertex has at least one defender of the relevant colour by
	singleton defense. Hence a failure can occur only if the two attacked
	vertices have the same unique defender of some colour. Their closed
	neighbourhoods must therefore intersect, so the vertices are at distance
	two. Translating the pair to \(\{0,e_r+e_s\}\), the common unique defender
	can occur only when the two distinct labels \(\lambda_r\) and \(\lambda_s\)
	both occur exactly once, with an argument similar to the previous paragraph. This contradicts condition~\textup{(iii)}.
	Pairs at greater distance have disjoint closed neighbourhoods and therefore
	cannot cause a pair-Hall failure. Hence the colouring is pair-defensive.
	
	Finally, \(\mathbb F_2^h\) has \(q-1\) nonzero vectors. Condition~\textup{(ii)}
	requires all of them to occur, while condition~\textup{(iii)} allows at most
	one to occur once. Consequently,
	\[
	d\ge 1+2(q-2)=2q-3.
	\]
	Equality is attained by using one nonzero label once and every other nonzero
	label twice.
\end{proof}

\begin{example}[A binary-linear pair-defensive colouring of \(Q_5\)]
	\label{ex:q5-linear}
	Consider \(Q_5\) with four colours. Assign the following labels in
	\(\mathbb F_2^2\) to its five coordinates:
	\[
	(1,0),\ (1,0),\ (0,1),\ (1,1),\ (1,1).
	\]
	The resulting colouring is
	\[
	c(x_1,x_2,x_3,x_4,x_5)
	=
	(x_1+x_2+x_4+x_5,\,
	x_3+x_4+x_5),
	\]
	where all sums are taken modulo \(2\).
	
	One nonzero label occurs once, while each of the other two nonzero labels
	occurs twice. Therefore, by \cref{thm:linear-criterion}, this is a proper,
	surjective, pair-defensive \(4\)-colouring of \(Q_5\), with respect to
	either parity class. Since
	\[
	\left\lfloor\frac{5+3}{2}\right\rfloor=4,
	\]
	the pair-defense bound is attained, and hence
	\[
	\sigma_2(Q_5)=4.
	\]
\end{example}

The next construction is linear only on one parity class and is not an
instance of \cref{thm:linear-criterion}.

\begin{example}[$Q_7$ with five colours]
    \label{ex:q7}
    Label the seven coordinates by the multiset
    \[
    0,1,1,2,2,3,3
    \]
    in $\mathbb F_2^2$. Colour every even vertex with a new colour $\infty$.
    For an odd vertex $x$, define
    \[
    c(x)=\sum_j x_j\lambda_j
    \]
    and colour $x$ by $c(x)$.
    
    Consider an even attacked vertex $x$. Its seven neighbours have colours
    \[
    c(x+e_j)=c(x)+\lambda_j,
    \]
    so among these neighbours one finite colour appears once and each of the
    other three finite colours appears twice. The colour $\infty$ is represented
    by $x$ itself.
    
    The only possible singleton finite defender is obtained by flipping the
    coordinate whose label is zero. For each attacked vertex this finite
    colour has that unique defender, and the zero coordinate is fixed, so
    distinct attacked vertices have distinct singleton defenders. Every other
    finite colour has two defenders already in each open neighbourhood; the
    colour $\infty$ is defended by each attacked vertex itself. Thus singleton
    Hall and pair Hall hold, and the upper bound gives
    \[
    \sigma_2(Q_7)=5.
    \]
\end{example}

We next describe a dimension-lift construction. It allows a defensive silver
colouring of \(Q_d\) to be extended to one of \(Q_{d+1}\) without decreasing
the number of colours or the maximum allowed attack size. We use \(\square\) to denote the Cartesian product of graphs. The vertices of
\(G\square H\) are pairs \((u,v)\in V(G)\times V(H)\), and two such pairs
are adjacent if either their first coordinates are equal and their second
coordinates are adjacent in \(H\), or their second coordinates are equal and
their first coordinates are adjacent in \(G\). In particular,
\[
Q_{d+1}=Q_d\square K_2,
\]
where \(K_2\) is the graph on two vertices joined by an edge.

\begin{proposition}[Dimension lift]
	\label{prop:lift}
	For every \(d\ge1\) and every \(k\ge1\),
	\[
	\sigma_k(Q_{d+1})\ge \sigma_k(Q_d).
	\]
\end{proposition}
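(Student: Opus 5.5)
Given an $(I,k,q)$-defensive silver colouring $c$ of $Q_d$ with $q=\sigma_k(Q_d)$, I will lift it along the projection $\pi:Q_{d+1}\to Q_d$ that forgets the last coordinate. The pure pullback $c\circ\pi$ is improper, since the new edges $(x,0)(x,1)$ would be monochromatic. I will therefore twist by parity. Fix a colour-swapping device: let $\tau$ be the involution of $\mathbb F_2^d$ given by $x\mapsto x+e_1$, which maps each parity class of $Q_d$ to the other. Define
\[
c'(x,0)=c(x),\qquad c'(x,1)=c(x+e_1).
\]
Here the attacked parity $I'$ of $Q_{d+1}$ consists of the vertices $(x,0)$ with $x\in I$ and $(x,1)$ with $x+e_1\in I$.

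The first step is to check that $c'$ is proper. Edges inside a layer are fine, since the top layer is $c$ composed with a cube automorphism. A vertical edge $(x,0)(x,1)$ receives the colours $c(x)$ and $c(x+e_1)$, which differ because $x$ and $x+e_1$ are adjacent in $Q_d$. Surjectivity is inherited from the bottom layer.

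The second step is to show that $c'$ is $(I',k,q)$-defensive, and this is the main step. Define the map $\phi:Q_{d+1}\to Q_d$ by $\phi(x,0)=x$ and $\phi(x,1)=x+e_1$. Then $c'=c\circ\phi$, and $\phi$ sends $I'$ into $I$. For $(y,b)\in I'$, the closed neighbourhood $N[(y,b)]$ contains the in-layer part, which $\phi$ maps bijectively onto $N[\phi(y,b)]$. Given an attack $S'\subseteq I'$ with $|S'|\le k$, I would like to pull back the injection $f_{i,\phi(S')}$. The difficulty is that $\phi$ restricted to $I'$ is two-to-one: $(x,0)$ and $(x+e_1,1)$ have the same image. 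So $\phi(S')$ may be smaller than $S'$, and distinct defenders are then not automatic.

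To handle this, I will not use $\phi$ directly but instead defend each attacked vertex within its own layer. Let $S'_0$ and $S'_1$ be the parts of $S'$ in the two layers, with $|S'_0|,|S'_1|\le k$. For $S'_0$, take defenders $f_{i,\pi(S'_0)}$ placed in layer $0$. For $S'_1$, take defenders via the automorphism of the top layer, placed in layer $1$. The two layers are disjoint vertex sets, so the combined map is injective. Each chosen defender lies in the in-layer closed neighbourhood of its attacked vertex, and that neighbourhood is contained in $N[\cdot]$ in $Q_{d+1}$. The colours are correct because each layer carries a copy of $c$ under a colour-preserving identification that sends attacked vertices to $I$.

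I expect the main obstacle to be confirming that the top-layer identification $x\mapsto x+e_1$ sends $I'\cap(\text{layer }1)$ onto $I$ and is an automorphism of $Q_d$ preserving the closed-neighbourhood structure. Both facts are immediate for translations, so the layered argument gives $\sigma_k(Q_{d+1})\ge q$.
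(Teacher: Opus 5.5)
Your proof is correct and follows the paper's construction: you use the same folded colouring \(c'=c\circ\phi\) with \(\phi(x,t)=x+te_1\), and each attacked vertex gets its defender inside its own layer, so the two vertices of a fibre never compete for a defender. The only difference is cosmetic: you defend \(S'_0\) and \(S'_1\) separately, one defence per layer, whereas the paper takes a single defence of the projected attack \(\phi(A)\) and lifts it into both layers; both versions work.
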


\begin{proof}
	Let \(I\) be the attacked parity class of \(Q_d\), and let
	\[
	c:Q_d\longrightarrow[q]
	\]
	be an \((I,k,q)\)-defensive silver colouring. We construct a colouring of
	\(Q_{d+1}=Q_d\square K_2\).
	
	Fix a coordinate vector \(e_1\in\mathbb F_2^d\), and define the folding map
	\[
	\phi:V(Q_{d+1})\longrightarrow V(Q_d),
	\qquad
	\phi(x,t)=x+t e_1,
	\qquad t\in\{0,1\}.
	\]
	Define the lifted colouring \(c'\) by
	\[
	c'(x,t)=c(\phi(x,t)).
	\]
	Let \(\varepsilon\in\{0,1\}\) be such that
	\[
	I=\{x\in\mathbb F_2^d:\wt(x)\equiv\varepsilon\pmod2\}.
	\]
	The corresponding parity class in \(Q_{d+1}=Q_d\square K_2\) is
	\[
	I'
	=
	\{(x,t)\in\mathbb F_2^d\times\mathbb F_2:
	\wt(x)+t\equiv\varepsilon\pmod2\}.
	\]
	Since
	\[
	\wt(\phi(x,t))
	=\wt(x+t e_1)
	\equiv\wt(x)+t\pmod2,
	\]
	we have
	\[
	I'=\phi^{-1}(I).
	\]
	Thus \(I'\) is precisely the parity class in \(Q_{d+1}\) corresponding to
	\(I\), and \(\phi\) maps \(I'\) two-to-one onto \(I\).
	The two vertices in the fibre above \(p\in I\) are
	\[
	(p,0)
	\qquad\text{and}\qquad
	(p+e_1,1).
	\]
	They have the same parity and may therefore occur together in an attack.
	By contrast, \((x,0)\) and \((x,1)\) have opposite parity, so they cannot
	belong to the same attack.
	
	The map \(\phi\) sends both types of edges of
	\(Q_d\square K_2\) to an edge of \(Q_d\). Therefore, if \(c\) is proper, then
	\(c'\) is also proper.
	
	Now let \(A\subseteq I'\) be an attack with \(|A|\le k\), and put
	\[
	P=\phi(A)\subseteq I.
	\]
	Since \(|P|\le|A|\le k\), the colouring \(c\) provides, for every colour
	\(i\in[q]\), an injective defence
	\[
	p\longmapsto z_p\in C_i
	\qquad (p\in P),
	\]
	where \(z_p\in N[p]\).
	
	For a vertex \((x,t)\in A\), put \(p=\phi(x,t)=x+t e_1\) and define the
	corresponding lift of \(z_p\) by
	\[
	\widetilde z_p^{\,t}=(z_p+t e_1,t).
	\]
	This vertex has colour \(i\), because
	\[
	c'(\widetilde z_p^{\,t})
	=c\bigl((z_p+t e_1)+t e_1\bigr)
	=c(z_p)=i.
	\]
	Moreover, since \(z_p\in N[p]\), the vertex
	\(\widetilde z_p^{\,t}\) belongs to \(N[(x,t)]\): its distance from
	\((x,t)\) equals the distance from \(z_p\) to \(p\). If both vertices in the
	fibre above \(p\) belong to \(A\), we use the two distinct lifts
	\(\widetilde z_p^{\,0}\) and \(\widetilde z_p^{\,1}\). If only one belongs
	to \(A\), we use the corresponding lift.
	
	The projected defenders \(z_p\) are distinct for distinct \(p\), and therefore
	their lifts are also distinct. Hence every attack \(A\subseteq I'\) with
	\(|A|\le k\) has an injective defence in every colour class. Thus \(c'\) is an
	\((I',k,q)\)-defensive silver colouring of \(Q_{d+1}\). Consequently,
	\[
	\sigma_k(Q_{d+1})\ge \sigma_k(Q_d).
	\]
\end{proof}

\begin{theorem}[Four dyadic families]
	\label{thm:dyadic}
	If \(q=2^h\ge2\), then
	\[
	\begin{aligned}
		\sigma_2(Q_{2q-3})&=\sigma_2(Q_{2q-2})=q,\\
		\sigma_2(Q_{2q-1})&=\sigma_2(Q_{2q})=q+1.
	\end{aligned}
	\]
\end{theorem}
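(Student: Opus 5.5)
The plan is to combine the upper bound of \cref{thm:pair-bound} with explicit constructions, using \cref{prop:lift} to propagate lower bounds upward by one dimension. First compute the upper bounds. For \(d=2q-3\) and \(d=2q-2\) we get \(\lfloor (2q)/2\rfloor=q\) and \(\lfloor(2q+1)/2\rfloor=q\). For \(d=2q-1\) and \(d=2q\) we get \(\lfloor(2q+2)/2\rfloor=q+1\) and \(\lfloor(2q+3)/2\rfloor=q+1\). So it suffices to show \(\sigma_2(Q_{2q-3})\ge q\) and \(\sigma_2(Q_{2q-1})\ge q+1\). Then \cref{prop:lift} gives \(\sigma_2(Q_{2q-2})\ge q\) and \(\sigma_2(Q_{2q})\ge q+1\), and the four equalities follow.

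For \(d=2q-3\) with \(q=2^h\), apply \cref{thm:linear-criterion}. Label the coordinates by the nonzero vectors of \(\mathbb F_2^h\), using one nonzero vector once and each of the remaining \(q-2\) nonzero vectors twice. This uses \(1+2(q-2)=2q-3\) coordinates and satisfies (i)--(iii), so it gives a pair-defensive \(q\)-colouring. The case \(q=2\) (\(h=1\), \(d=1\)) is degenerate but still fine: the single label \(1\) yields the parity 2-colouring of \(Q_1\), which attains \(\lfloor 4/2\rfloor=2\). Alternatively, \cref{prop:elementary-bounds} gives \(\sigma_2\ge2\) directly.

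For \(d=2q-1\), generalize \cref{ex:q7}. Label the coordinates by the multiset consisting of \(0\) once and each nonzero vector of \(\mathbb F_2^h\) twice. This has \(1+2(q-1)=2q-1\) entries. Colour every vertex of \(I\) with a new colour \(\infty\), and every vertex \(x\in D\) with \(c(x)=\sum_j x_j\lambda_j\). Properness holds because \(Q_d\) is bipartite and \(\infty\) lives only on \(I\). Surjectivity holds because every finite colour appears around any attacked vertex.

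For attacked \(x\), the neighbours have colours \(c(x+e_j)\), computed by the same formula \(\sum_k (x+e_j)_k\lambda_k=\sigma(x)+\lambda_j\), where \(\sigma(x)=\sum_k x_k\lambda_k\). So the finite colour \(\sigma(x)\) appears once, via the zero-labelled coordinate \(j_0\), and every other finite colour appears exactly twice. Colour \(\infty\) is defended by self-defense, and attacked vertices are distinct, so Hall holds for \(\infty\). For a finite colour \(i\), Hall can only fail on a pair \(\{x,y\}\) if both have a unique defender of colour \(i\) and it is the same vertex; a colour appearing twice around one of them already supplies two defenders. Unique defenders have the form \(x+e_{j_0}\), and \(x\mapsto x+e_{j_0}\) is injective, so distinct attacked vertices never share one. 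Hence pair Hall holds, as in \cref{ex:q7}, giving \(\sigma_2(Q_{2q-1})\ge q+1\).

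The main obstacle is checking this second construction carefully. One point is that the formula \(c\) is only applied to vertices of \(D\), so it is consistent. For \(q=2\) (\(d=3\)), the labels are \(0,1,1\) and the construction gives 3 colours on \(Q_3\), matching \(\lfloor 6/2\rfloor=3\). The remaining steps are then just bookkeeping with \cref{prop:lift}.
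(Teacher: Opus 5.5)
Your proposal is correct and follows essentially the same route as the paper. Both combine the upper bound from \cref{thm:pair-bound}, the binary-linear construction of \cref{thm:linear-criterion} in dimension \(2q-3\), the \(\infty\)-on-\(I\) construction with one zero label and two copies of each nonzero label in dimension \(2q-1\), and \cref{prop:lift} for the two even dimensions. Your explicit handling of the degenerate case \(q=2\) adds detail the paper leaves implicit, as does your justification that pair Hall can fail only through a shared unique defender.
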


\begin{proof}
	First consider dimension \(2q-3\). Since \(q=2^h\), the binary-linear
	construction of \cref{thm:linear-criterion} gives a pair-defensive colouring
	of \(Q_{2q-3}\) with \(q\) colours. By the dimension-lift proposition
	\cref{prop:lift}, the same number of colours is attainable in \(Q_{2q-2}\).
	
	Now consider dimension \(2q-1\). Label the coordinates by the multiset
	consisting of one zero vector and two copies of every nonzero vector of
	\(\mathbb F_2^h\). Colour the attacked parity class with a new colour
	\(\infty\). For a vertex \(x\) in the defender parity class, define its
	colour by
	\[
	c(x)=\sum_j x_j\lambda_j.
	\]
	As in \cref{ex:q7}, every attacked vertex sees the colour \(\infty\) through
	self-defense, one finite colour once, and every other finite colour twice.
	The unique finite defender, when it occurs, is obtained by flipping the
	coordinate labelled by the zero vector. Since this coordinate is fixed, the
	resulting map is injective. Hence we obtain a pair-defensive colouring of
	\(Q_{2q-1}\) with \(q+1\) colours.
	
	Applying the dimension lift once more gives \(q+1\) colours in \(Q_{2q}\).
	In all four dimensions, these constructions attain the upper bound from
	\cref{thm:pair-bound}. Therefore the stated equalities hold.
\end{proof}

Thus the upper bound is sharp in infinitely many dimensions. However,
\cref{thm:dyadic} does not characterize all dimensions in which equality may
hold.

\section{Equality in the odd critical dimension}
\label{sec:odd-equality}

We now study the structure of colourings that attain the pair bound in the
odd critical dimension
\[
d=2q-3.
\]
Throughout this section, let
\[
A_i=C_i\cap I,
\qquad
B_i=C_i\cap D
\]
be the attacked and defender parts of colour \(i\), respectively. For
\(x\in I\), define
\[
n_i(x)=|N(x)\cap B_i|.
\]
Thus \(n_i(x)\) counts how many neighbours of \(x\) in the defender parity
have colour \(i\).

The vector
\[
(n_i(x))_{i=1}^q
\]
is called the \emph{open profile} at \(x\). The corresponding
\emph{closed profile} is
\[
(|N[x]\cap C_i|)_{i=1}^q.
\]
When we write a profile, we regard its entries up to permutation.

Let \(H_d\) denote the halved cube on \(D\): its vertex set is \(D\), and two
vertices are adjacent when their Hamming distance in \(Q_d\) is two. An
\emph{ordered near-partition} of \(D\) is a labelled list
\[
B_1,\ldots,B_q
\]
of pairwise disjoint subsets of \(D\) whose union is \(D\). Empty blocks are
allowed.

\begin{lemma}[Equality profile]
	\label{lem:equality-profile}
	In an extremal \(q\)-colouring of \(Q_{2q-3}\), every closed profile
	\[
	\bigl(|N[x]\cap C_i|\bigr)_{i=1}^q,
	\qquad x\in I,
	\]
	is a permutation of
	\[
	(1,1,2,\ldots,2).
	\]
	One of the two entries equal to \(1\) corresponds to the colour \(c(x)\)
	and is realized by \(x\) itself. The other corresponds to a unique
	defender in \(D\). These latter defenders define a perfect matching
	between \(I\) and \(D\).
\end{lemma}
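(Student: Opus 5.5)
The plan is to extract everything from the equality case of the counting argument in the proof of \cref{prop:regular-pair-bound}, specialised to $Q_d$ with $d=2q-3$ and $m=|I|=|D|=2^{d-1}$. Since an extremal colouring has $q=\lfloor (d+3)/2\rfloor=q$ colours and $m(d+1)=m(2q-2)=2qm-2m$, the chain
\[
m(d+1)=\sum_{x\in I}\sum_{i=1}^q a_i(x)\ge 2qm-S\ge 2qm-2m
\]
must be tight at both inequalities. Tightness of the first means that for every $x\in I$, every colour with $a_i(x)\ge2$ has $a_i(x)=2$. Tightness of the second means $S=2m=|V(Q_d)|$. Because $s_i\le|C_i|$ for each $i$, summing gives $s_i=|C_i|$ for every colour. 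Hence the injection from colour-$i$ singleton incidences to their unique defenders is a bijection onto $C_i$.

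Next I determine the number of $1$'s in each closed profile. Fix $x\in I$ and let $u(x)$ be the number of colours with $a_i(x)=1$. Then $d+1=\sum_i a_i(x)=u(x)+2(q-u(x))$, so $u(x)=2q-(d+1)=2$. Thus every closed profile is a permutation of $(1,1,2,\ldots,2)$. This step is purely local and avoids any averaging subtlety, because the first equality already forces each $a_i(x)\in\{1,2\}$. By properness, $x$ is the only vertex of colour $c(x)$ in $N[x]$, since all its neighbours differ from it in colour. So one of the two $1$-entries is the colour $c(x)$, realised by $x$ itself. The other $1$-entry is some colour $j\ne c(x)$, whose unique representative in $N[x]$ lies in $N(x)\subseteq D$; call it $g(x)\in D$.

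Finally I show that $g:I\to D$ is a bijection; since $g(x)$ is adjacent to $x$, this is the same as a perfect matching. For injectivity, suppose $g(x)=g(y)=z$ with $x\ne y$. Then $z$ is the unique defender of colour $c(z)$ for both $x$ and $y$, so $|N[\{x,y\}]\cap C_{c(z)}|=1$, which contradicts pair Hall. (Alternatively, this follows from the bijection established above.) For surjectivity, the total number of singleton incidences is $S=2m$. Exactly $m$ of these are the self-incidences $(x,c(x))$, whose defenders are the vertices of $I$, and the other $m$ are the incidences $(x,j)$ with defender $g(x)$. Since the map from singleton incidences to defenders is a bijection onto $V(Q_d)=I\sqcup D$, and the self-incidences exhaust $I$, the defenders $g(x)$ must exhaust $D$. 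Equivalently, $g$ is injective with $|I|=|D|=m$, so it is onto.

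The only delicate point is making sure that tightness of the second inequality, $S\le 2m$, really forces $s_i=|C_i|$ for every colour and not merely in total. This follows because $s_i\le|C_i|$ holds termwise and $\sum_i|C_i|=2m$. Everything else is bookkeeping.
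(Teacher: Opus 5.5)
Your proposal is correct and follows essentially the same route as the paper. Tightness of both inequalities in the counting chain forces every nonsingleton entry to equal $2$ and $S=2m$, and the local count $d+1=u(x)+2(q-u(x))$ then gives exactly two singleton entries at each $x\in I$; properness identifies one of them as $c(x)$, and pair Hall together with $|I|=|D|$ makes the remaining defenders a perfect matching (your alternative surjectivity argument via the bijection from singleton incidences onto $V(Q_d)$ is the observation the paper records just after the pair-bound corollary).
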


\begin{proof}
    Equality holds in every step of the counting argument following
    \cref{thm:pair-bound}. Hence every nonsingleton entry in every closed profile
    is equal to two, and
    \[
    S=2m,
    \qquad
    s_i=|C_i|
    \quad\text{for every }i\in[q].
    \]
    
    Let \(r_x\) be the number of singleton entries in the closed profile at
    \(x\in I\). Since \(d=2q-3\), we have \(|N[x]|=d+1=2q-2\). The entries of
    the closed profile are positive, the singleton entries contribute \(1\), and
    all other entries contribute \(2\). Therefore
    \[
    2q-2
    =
    d+1
    =
    r_x+2(q-r_x),
    \]
    so \(r_x=2\).
    
    One of these singleton entries is always the colour of \(x\), represented by
    \(x\) itself. Thus each \(x\in I\) has exactly one further singleton defender,
    and this defender lies in \(D\). Pair Hall implies that the same vertex of
    \(D\) cannot be this unique singleton defender for two different attacked
    vertices. Hence these singleton defenders define an injection
    \[
    I\longrightarrow D.
    \]
    Since equality gives exactly \(m=|I|\) such incidences and \(|D|=m\), this
    injection is a bijection.
\end{proof}

\begin{theorem}[Odd extremal partition theorem]
    \label{thm:odd-partition}
    Extremal pair-defensive \(q\)-colourings of \(Q_{2q-3}\), with attacked
    parity \(I\), are in bijection with ordered near-partitions
    \[
    B_1,\ldots,B_q
    \]
    of \(D\) such that each nonempty induced graph \(H_d[B_i]\) is triangle-free
    and \((q-2)\)-regular.
    
    The extension from \(D\) to \(I\) is unique: each vertex \(x\in I\) receives
    the unique colour \(i\) for which
    \[
    N(x)\cap B_i=\varnothing .
    \]
\end{theorem}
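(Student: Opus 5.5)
The plan is to prove the two directions separately. The key tool is that a triangle of the halved cube always has a common neighbour in \(I\). Indeed, three vertices of \(D\) that are pairwise at distance two have the form \(x+e_a,x+e_b,x+e_c\) or \(y+e_a+e_b,\,y+e_a+e_c,\,y+e_b+e_c\). In the second case the vertex \(y+e_a+e_b+e_c\in I\) is adjacent to all three. Hence, for a set \(B\subseteq D\), the graph \(H_d[B]\) is triangle-free if and only if \(|N(x)\cap B|\le 2\) for every \(x\in I\). A second fact I will use is that two vertices of \(D\) at distance two have exactly two common neighbours, both in \(I\).

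\emph{From colourings to partitions.} Start from an extremal colouring and set \(B_i=C_i\cap D\). By \cref{lem:equality-profile}, every closed profile is \((1,1,2,\ldots,2)\). Since \(x\) is the only vertex of colour \(c(x)\) in \(N[x]\), the open profile is \((0,1,2,\ldots,2)\), and \(c(x)\) is the unique colour with \(N(x)\cap B_i=\varnothing\). This gives the uniqueness of the extension.

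\begin{itemize}
\item \emph{Triangle-free.} Since \(n_i(x)\le 2\) for all \(x\) and \(i\), each \(H_d[B_i]\) is triangle-free by the first fact.
\item \emph{Regularity.} Fix \(z\in B_i\), which has \(d=2q-3\) neighbours \(x\in I\), each with \(n_i(x)\in\{1,2\}\). By the perfect matching in \cref{lem:equality-profile}, exactly one of these neighbours has \(z\) as its unique colour-\(i\) defender. Each of the other \(2q-4\) neighbours \(x\) sees exactly one further vertex \(w\in B_i\), and \(w\) is an \(H_d\)-neighbour of \(z\). Conversely, each \(H_d\)-neighbour \(w\in B_i\) of \(z\) arises from exactly its two common neighbours with \(z\). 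Therefore \(\deg_{H_d[B_i]}(z)=(2q-4)/2=q-2\).
\end{itemize}

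\emph{From partitions to colourings.} Start from a near-partition with each nonempty \(H_d[B_i]\) triangle-free and \((q-2)\)-regular.

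\begin{itemize}
\item \emph{Local counts.} Triangle-freeness gives \(n_i(x)\le2\). Writing \(z_0,z_1\) for the numbers of colours with \(n_i(x)=0,1\), the identity \(\sum_i n_i(x)=2q-3\) forces \(2z_0+z_1=3\).
\item \emph{Global count.} Fix \(i\). Each \(x\) with \(n_i(x)=2\) spans exactly one \(H_d\)-edge of \(B_i\), and each such edge has exactly two such \(x\). So \(\#\{x:n_i(x)=2\}=|B_i|(q-2)\). Since \(\sum_x n_i(x)=|B_i|(2q-3)\), we get \(\#\{x:n_i(x)=1\}=|B_i|\).
\item \emph{Profile at each vertex.} Summing over \(i\), the total number of entries equal to \(1\) is \(m\). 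Each \(x\) has \(z_1\in\{1,3\}\), so every \(x\) has \(z_1=1\) and \(z_0=1\).
\end{itemize}

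Now define \(c(x)\) on \(I\) to be the unique colour with \(n_i(x)=0\). This is proper, since \(I\) is independent. The closed profile at every \(x\) is \((1,1,2,\ldots,2)\), so singleton Hall holds.

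\begin{itemize}
\item \emph{Surjectivity.} If \(B_i=\varnothing\), then \(c\equiv i\) on \(I\). The profile count then shows that at most one block is empty, so every colour is used.
\item \emph{Pair Hall.} By \cref{lem:connected-square}, it suffices to check pairs \(x,y\in I\) at distance two, and failure needs a common unique defender of some colour \(i\). A self-defender \(x\) of colour \(c(x)\) lies in \(I\), so it cannot be shared with the \(D\)-defender of \(y\), nor with \(y\) itself. For a \(D\)-defender \(z\in B_i\), the regularity count shows that \(2q-4\) of its \(2q-3\) neighbours have \(n_i=2\). Hence exactly one neighbour of \(z\) has \(z\) as its unique colour-\(i\) defender, so no two attacked vertices share it.
\item \emph{Extremality.} Since \(q=\lfloor (d+3)/2\rfloor\), the colouring attains the bound.
\end{itemize}

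Finally, the two constructions are mutually inverse: restricting to \(D\) and re-extending returns the original colouring, because the extension is forced.

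\emph{Expected obstacle.} The main obstacle is the converse step that rules out the local profile \(z_0=0,z_1=3\). Regularity alone does not do this pointwise. I plan to handle it with the global double count of \(n_i(x)=1\) incidences, which forces exactly one singleton colour at every attacked vertex.
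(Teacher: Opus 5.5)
Your proof is correct and follows the paper's argument essentially step for step. You use the same equality-profile input, the same two-to-one correspondence behind $2\deg_{H_d[B_i]}(z)=\sum_{x\in N(z)}(n_i(x)-1)$, and the same global count of singleton entries to exclude the profile $z_0=0,\ z_1=3$. The only differences are cosmetic: you obtain $\#\{x:n_i(x)=1\}=|B_i|$ per colour by counting edges of $H_d[B_i]$ rather than pointwise at each $z$. Also, properness of the extension really comes from $c(x)$ being absent from $N(x)$, not merely from $I$ being independent.
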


\begin{proof}
    We first show that every extremal colouring gives such a partition of \(D\).
    Then we prove that any partition with the stated properties uniquely extends
    to an extremal colouring.
    
    Suppose first that we have an extremal colouring. By
    \cref{lem:equality-profile}, the open profile at every \(x\in I\) is a
    permutation of
    \[
    (0,1,2,\ldots,2).
    \]
    In particular, for every colour \(i\) and every \(x\in I\), we have
    \(n_i(x)\le2\).
    
    Three vertices in the same parity class of the cube form a triangle in the
    halved cube exactly when they have a common neighbour in the opposite parity
    class. Therefore, if \(H_d[B_i]\) contained a triangle, some attacked vertex
    \(x\in I\) would have at least three neighbours in \(B_i\), contradicting
    \(n_i(x)\le2\). Hence each \(H_d[B_i]\) is triangle-free.
    
    Now fix a vertex \(z\in B_i\). Among the \(d\) attacked neighbours of \(z\),
    the vertex \(z\) is the unique \(B_i\)-neighbour for exactly one of them,
    namely the attacked vertex matched to \(z\) in
    \cref{lem:equality-profile}. For every other attacked neighbour \(x\) of
    \(z\), there is exactly one further vertex of \(B_i\) adjacent to \(x\).
    
    Each edge of \(H_d[B_i]\) incident with \(z\) corresponds to a vertex of
    \(B_i\) at Hamming distance two from \(z\). Such a pair has exactly two
    common neighbours in \(I\). Hence
    \[
    2\deg_{H_d[B_i]}(z)
    =
    \sum_{x\in N(z)}\bigl(n_i(x)-1\bigr).
    \]
    The sum has one zero term and \(d-1\) terms equal to one, so
    \[
    2\deg_{H_d[B_i]}(z)=d-1=2q-4.
    \]
    Therefore
    \[
    \deg_{H_d[B_i]}(z)=q-2.
    \]
    Thus every nonempty block \(B_i\) induces a triangle-free
    \((q-2)\)-regular subgraph of \(H_d\).
    
    Conversely, suppose that \(B_1,\ldots,B_q\) is an ordered near-partition of
    \(D\) such that every nonempty \(H_d[B_i]\) is triangle-free and
    \((q-2)\)-regular. Triangle-freeness implies
    \[
    n_i(x)\le2
    \]
    for every \(x\in I\) and every \(i\in[q]\), since three neighbours of
    \(x\) in the same block would form a triangle in the halved cube.
    
    Fix \(z\in B_i\). As above,
    \[
    2\deg_{H_d[B_i]}(z)
    =
    \sum_{x\in N(z)}\bigl(n_i(x)-1\bigr).
    \]
    Since \(H_d[B_i]\) is \((q-2)\)-regular and \(d=2q-3\), the left-hand side is
    \[
    2(q-2)=2q-4=d-1.
    \]
    There are \(d\) terms in the sum, and each term is either \(0\) or \(1\),
    because \(1\le n_i(x)\le2\) whenever \(x\in N(z)\). Hence exactly one of
    these terms is zero. Equivalently, each \(z\in B_i\) is the unique
    \(B_i\)-neighbour of exactly one attacked vertex.
    
    Summing over all blocks, there are exactly \(|D|=2^{d-1}\)
    singleton pairs \((x,i)\) with \(n_i(x)=1\). For a fixed \(x\in I\), let
    \(z_x\) and \(o_x\) denote the numbers of zero and one entries, respectively,
    in the open profile
    \[
    (n_i(x))_{i=1}^q.
    \]
    Since the blocks partition \(D\),
    \[
    \sum_{i=1}^q n_i(x)=d=2q-3.
    \]
    All entries are at most \(2\), so
    \[
    2z_x+o_x=3.
    \]
    In particular, \(o_x\ge1\) for every \(x\in I\). On the other hand, the total
    number of singleton pairs is \(|I|\), and therefore
    \[
    \sum_{x\in I}o_x=|I|.
    \]
    It follows that \(o_x=1\) for every \(x\in I\), and hence \(z_x=1\) for every
    \(x\in I\).
    
    We now extend the colouring to \(I\). Colour each \(x\in I\) with the unique
    colour \(i\) satisfying
    \[
    N(x)\cap B_i=\varnothing.
    \]
    This is the unique zero entry in the open profile at \(x\).
    
    The resulting colouring is proper. Indeed, if \(x\) receives colour \(i\),
    then \(x\) has no neighbour in \(B_i\), and therefore no neighbour of its own
    colour. Every colour class is nonempty. If \(B_i\neq\varnothing\), then colour \(i\)
    already appears in \(D\). If \(B_i=\varnothing\), then every vertex of \(I\)
    is assigned colour \(i\), so the colour is still used.
    
    Singleton Hall follows from the open profile: every colour either appears
    among the neighbours of \(x\), or is the colour assigned to \(x\) itself.
    Finally, suppose that pair Hall fails for some colour \(i\). Then two distinct
    attacked vertices would have the same unique colour-\(i\) defender. A defender
    in \(D\) cannot have this property by the singleton uniqueness established
    above. A defender in \(I\) can defend only itself, because \(I\) is
    independent. Thus pair Hall cannot fail.
    
    The extension is therefore a proper pair-defensive colouring. Since the colour
    assigned to each \(x\in I\) is forced by its unique zero entry, the two
    constructions are inverse.
\end{proof}

Empty blocks are allowed and can occur naturally. For example, when \(q=3\)
and \(d=3\), the defender parity can be divided into blocks of sizes
\[
2,\ 2,\ 0.
\]
Indeed, take \(D\) to be the even parity class of \(Q_3\), and define
\[
B_1=\{000,011\},\qquad
B_2=\{101,110\},\qquad
B_3=\varnothing.
\]
The halved cube on \(D\) is \(K_4\), so each nonempty block induces a
single edge and is therefore triangle-free and \(1\)-regular. Thus these
blocks satisfy the conditions of \cref{thm:odd-partition}, and the unique
extension colours every vertex of the attacked parity with colour \(3\).
This gives an extremal \(3\)-colouring of \(Q_3\) with defender-block sizes
\(2,2,0\).
The empty block simply corresponds to a colour that is absent from the
neighbourhood of every attacked vertex; that colour is then assigned to all
vertices of the attacked parity.

It is helpful to view each nonempty block \(B_i\) as the set of available
defenders of colour \(i\). Locally, such a block is a two-fold radius-one
packing: it meets every radius-one ball in the opposite parity class in at
most two vertices. For subsets of one parity class, this is equivalent to
saying that the induced subgraph of the halved cube is triangle-free
\cite{krotov2021multifold}. This describes the possible individual blocks,
but not how the blocks fit together.

The content of \cref{thm:odd-partition} is the global organization of these
blocks. The blocks \(B_1,\ldots,B_q\), including possible empty blocks, must
form an ordered near-partition of the entire defender parity. Once this
labelled partition is given, every attacked vertex sees exactly one missing
colour, and that missing colour determines its colour uniquely. Thus the
partition on the defender side contains enough information to reconstruct
the full pair-defensive colouring.

This viewpoint is different from the code partitions studied by Krotov and
Valyuzhenich \cite[Section~4.3.1 and Theorem~6]{krotov2024degree3}. They
classify partitions of the full cube \(Q_7\) into multifold
\(1\)-perfect codes. A \(\mu\)-fold \(1\)-perfect code meets every closed
radius-one ball in exactly \(\mu\) vertices, so its coverage number is the
same throughout the cube.

In our setting, the blocks cover only one parity class, namely the defender
parity. Their numbers of representatives in neighbourhoods of attacked
vertices may vary, and the missing colour at each attacked vertex is part of
the reconstruction process. Thus both settings involve partitions into
code-like objects and study neighbourhood incidences, but our blocks have a
different coverage requirement and serve as the defender-side data of a
proper pair-defensive colouring.

The full ordered near-partition also determines the sizes of the attacked
colour classes. The next identity uses this global colouring structure; it is
not a statement about an arbitrary two-fold radius-one packing.

Recall that
\[
A_i=C_i\cap I
\qquad\text{and}\qquad
B_i=C_i\cap D
\]
are, respectively, the attacked and defender parts of colour \(i\).

\begin{proposition}[Colour-class identity and empty blocks]
	\label{prop:odd-sizes}
	For every colour \(i\) in an extremal colouring of \(Q_{2q-3}\),
	\begin{equation}
		\label{eq:odd-size-identity}
		|A_i|
		=
		2^{d-1}-(q-1)|B_i|.
	\end{equation}
	If some defender block is empty, then \(q=2^u+1\) for some \(u\ge1\).
	Moreover, exactly one defender block is empty, every nonempty defender
	block has size
	\[
	\frac{2^{d-1}}{q-1},
	\]
	and all attacked colour classes except the one corresponding to the empty
	defender block are empty.
\end{proposition}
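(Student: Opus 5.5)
The plan is to prove the identity \eqref{eq:odd-size-identity} by double counting the edges between \(I\) and \(B_i\), sorted by the open profile, and then to read off everything about empty blocks from it. By \cref{lem:equality-profile} (equivalently, the proof of \cref{thm:odd-partition}), every open profile at \(x\in I\) is a permutation of \((0,1,2,\ldots,2)\). So for a fixed colour \(i\), each \(x\in I\) has \(n_i(x)\in\{0,1,2\}\). Moreover, \(n_i(x)=0\) holds exactly when \(x\) receives colour \(i\), that is, exactly when \(x\in A_i\). This uses the uniqueness of the extension in \cref{thm:odd-partition}.

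Let \(z_i,o_i,t_i\) be the numbers of \(x\in I\) with \(n_i(x)=0,1,2\), respectively. Then \(z_i+o_i+t_i=2^{d-1}\) and \(z_i=|A_i|\). The singleton incidences of colour \(i\) on the defender side are in bijection with \(B_i\), since each \(z\in B_i\) is the unique \(B_i\)-neighbour of exactly one attacked vertex; this was shown in the proof of \cref{thm:odd-partition}. Hence \(o_i=|B_i|\). Counting edges between \(I\) and \(B_i\) from the \(B_i\) side gives
\[
d|B_i|=\sum_{x\in I}n_i(x)=o_i+2t_i .
\]
Therefore \(t_i=(d-1)|B_i|/2=(q-2)|B_i|\). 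Substituting into \(z_i=2^{d-1}-o_i-t_i\) yields \(|A_i|=2^{d-1}-(q-1)|B_i|\), which is \eqref{eq:odd-size-identity}.

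For the second part, suppose \(B_j=\varnothing\). The identity then gives \(|A_j|=2^{d-1}=|I|\), so every attacked vertex has colour \(j\), and hence \(A_i=\varnothing\) for all \(i\ne j\). Applying the identity to each \(i\ne j\) gives \((q-1)|B_i|=2^{d-1}\), so \(|B_i|=2^{d-1}/(q-1)>0\). This shows that \(j\) is the only empty block and that all nonempty blocks have the stated common size. Since \(q-1\) divides a power of two, \(q-1=2^u\). Here \(q=2^u+1\) holds automatically, and \(u\ge1\) is equivalent to \(q\ge3\).

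The step I expect to need the most care is this last boundary case. For \(q=2\) we have \(d=1\), and the colouring with \(D=\{1\}\) coloured \(1\) and \(I=\{0\}\) coloured \(2\) is extremal with an empty block, which gives \(u=0\). I would therefore either state the claim under the standing assumption \(q\ge3\) (so \(d\ge3\)), or note that \(u=0\) occurs only in the trivial case \(Q_1\). The remaining steps are direct consequences of the profile structure already established, so I expect no further obstacles.
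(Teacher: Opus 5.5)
Your proof is correct and follows the paper's argument essentially step for step. Both compute $\sum_{x\in I} n_i(x)$ in two ways: once from the $(0,1,2,\ldots,2)$ profile, with exactly $|B_i|$ entries equal to $1$ via the singleton matching, and once as $d|B_i|$. Both then feed $B_j=\varnothing$ back in; the paper obtains $A_j=I$ from the missing-colour extension rule rather than from the identity, but the two routes are equivalent.

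Your boundary remark is also right. For $q=2$ and $d=1$, every extremal colouring has an empty block while $q-1=2^0$. So the condition $u\ge1$, which the paper's proof asserts without justification, genuinely requires $q\ge3$.
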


\begin{proof}
	Fix a colour \(i\). In the odd equality case, the open profile at every
	\(x\in I\) is a permutation of
	\[
	(0,1,2,\ldots,2).
	\]
	For colour \(i\), the value \(n_i(x)\) is zero precisely when \(x\in A_i\).
	It is one precisely when \(x\) is the unique attacked vertex matched to a
	vertex of \(B_i\). By \cref{lem:equality-profile}, this occurs for exactly
	\(|B_i|\) vertices of \(I\). At every remaining vertex of \(I\), the value of
	\(n_i(x)\) is two. Therefore
	\[
	\begin{aligned}
		\sum_{x\in I} n_i(x)
		&=
		0\cdot |A_i|
		+1\cdot |B_i|
		+2\bigl(2^{d-1}-|A_i|-|B_i|\bigr)\\
		&=
		2^d-2|A_i|-|B_i|.
	\end{aligned}
	\]
	On the other hand, this sum counts the cube edges between \(I\) and \(B_i\).
	Since every vertex of \(B_i\) has degree \(d\), the same quantity equals
	\(d|B_i|\). Hence
	\[
	d|B_i|=2^d-2|A_i|-|B_i|.
	\]
	Using \(d+1=2(q-1)\), we obtain
	\[
	|A_i|=2^{d-1}-(q-1)|B_i|,
	\]
	which proves \eqref{eq:odd-size-identity}.
	
	Now suppose that \(B_i=\varnothing\). Then colour \(i\) is absent from the
	open neighbourhood of every vertex in \(I\). Since each attacked vertex
	receives the unique colour missing from its open neighbourhood, every vertex
	of \(I\) has colour \(i\). Thus
	\[
	A_i=I.
	\]
	Consequently, \(A_j=\varnothing\) for every \(j\ne i\). Applying
	\eqref{eq:odd-size-identity} to such a colour \(j\) gives
	\[
	|B_j|=\frac{2^{d-1}}{q-1}.
	\]
	Therefore \(q-1\) divides \(2^{d-1}\), so \(q-1\) is a power of two. Hence
	\(q=2^u+1\) for some \(u\ge1\).
	
	Finally, two defender blocks cannot both be empty, since each empty block
	would force all vertices of \(I\) to receive its colour.
\end{proof}

Only the forward implication is claimed here: the condition \(q=2^u+1\) does
not force an empty defender block. For example, a silver lift with
\(q=5\) and \(d=7\) can have defender-block sizes
\[
8,\ 8,\ 16,\ 16,\ 16.
\]

There is also a useful component version of the partition theorem. Split each
defender block into its connected components in the halved cube. Define the
\emph{conflict graph} as follows: its vertices are these components, and two
components are adjacent if there is a halved-cube edge between them.

\begin{corollary}[Conflict-graph formulation]
    \label{cor:conflict-odd}
    We have
    \[
    \sigma_2(Q_{2q-3})=q
    \]
    if and only if the defender halved cube \(H_d\) admits a partition into
    connected induced triangle-free \((q-2)\)-regular subgraphs whose conflict
    graph is \(q\)-colourable.
    
    Every such partition has at least \(q-1\) components.
\end{corollary}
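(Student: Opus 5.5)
We derive the corollary from \cref{thm:odd-partition}, since \(\sigma_2(Q_{2q-3})\le q\) always holds by \cref{thm:pair-bound}. Thus \(\sigma_2(Q_{2q-3})=q\) holds exactly when an extremal \(q\)-colouring exists. By \cref{thm:odd-partition}, this is equivalent to the existence of an ordered near-partition \(B_1,\ldots,B_q\) of \(D\) in which every nonempty \(H_d[B_i]\) is triangle-free and \((q-2)\)-regular. The task is therefore to translate between ordered near-partitions of this kind and component partitions whose conflict graphs are properly \(q\)-coloured.

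\emph{Forward direction.} Start from such a near-partition and split each nonempty \(B_i\) into the vertex sets of the connected components of \(H_d[B_i]\). Each component is an induced subgraph of \(H_d\). Triangle-freeness and \((q-2)\)-regularity are inherited by components, because every edge of \(H_d[B_i]\) at a vertex stays inside that vertex's component. Colour each component by the index of its block. Two distinct components of the same block cannot be joined by a halved-cube edge, since such an edge would lie in \(H_d[B_i]\) and merge them. Hence this is a proper \(q\)-colouring of the conflict graph.

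\emph{Converse direction.} Start from a partition of \(D\) into connected induced triangle-free \((q-2)\)-regular pieces, together with a proper \(q\)-colouring of the conflict graph, and let \(B_i\) be the union of the pieces of colour \(i\). Because no \(H_d\)-edge joins two pieces of the same colour, \(H_d[B_i]\) is the disjoint union of those pieces. It is therefore triangle-free and \((q-2)\)-regular, or \(B_i\) is empty. \Cref{thm:odd-partition} then supplies the extremal colouring.

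\emph{Bound on the number of components.} The key step is to show that every such partition has at least \(q-1\) components. Take an extremal colouring as above, built from the given partition and its conflict colouring. By \cref{lem:equality-profile}, the open profile at each \(x\in I\) is \((0,1,2,\ldots,2)\), so \(x\) sees \(q-1\) distinct colours in \(N(x)\). The neighbours of \(x\) in different colour classes lie in different components, so there are at least \(q-1\) components. The step needing the most care is ensuring that this count refers to the given partition and not to a coarser one. This holds because the defender classes \(B_i\) are exactly the unions of the given pieces, and distinct colours necessarily lie in distinct pieces.

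A cross-check via \cref{prop:odd-sizes} also works. By \eqref{eq:odd-size-identity}, each \(|B_i|\le 2^{d-1}/(q-1)\), since \(|A_i|\ge0\). Therefore at least \(q-1\) blocks are nonempty, and each nonempty block contains at least one component.
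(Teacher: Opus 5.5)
Your proof is correct. The equivalence itself follows the paper exactly. You reduce to \cref{thm:odd-partition}, split each block into its components for the forward direction, and regroup the pieces by conflict colour for the converse.

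The only difference is the lower bound on the number of components. The paper argues globally: it uses \eqref{eq:odd-size-identity} to bound every block, and hence every component, by $2^{d-1}/(q-1)$, and then counts. Your cross-check is a minor variant of this.

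Your main argument is local instead: a single attacked vertex already meets $q-1$ distinct colours, and therefore $q-1$ distinct pieces. This route is more elementary, and it actually needs less than the equality profile. The $d=2q-3$ neighbours of any $x\in I$ form a clique in $H_d$, and an induced triangle-free piece contains at most two of them. So at least $\lceil(2q-3)/2\rceil=q-1$ pieces are required, whether or not the conflict graph is $q$-colourable.

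The paper's global route gives extra information in return: every component has at most $2^d/(d+1)$ vertices. That is the quantity the paper compares with Hoffman's bound in the remark that follows.
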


\begin{proof}
    Suppose first that \(\sigma_2(Q_{2q-3})=q\). By
    \cref{thm:odd-partition}, the defender parity \(D\) has an ordered
    near-partition
    \[
    B_1,\ldots,B_q
    \]
    such that each nonempty \(H_d[B_i]\) is triangle-free and
    \((q-2)\)-regular. Split each nonempty block into its connected components.
    These components are connected induced triangle-free \((q-2)\)-regular
    subgraphs of \(H_d\). Moreover, two components coming from the same block
    cannot be adjacent in \(H_d\), since otherwise they would lie in the same
    connected component of \(H_d[B_i]\). Thus the original labels
    \(1,\ldots,q\) give a proper \(q\)-colouring of the conflict graph.
    
    Conversely, suppose \(H_d\) is partitioned into connected induced
    triangle-free \((q-2)\)-regular subgraphs and that the resulting conflict
    graph has a proper \(q\)-colouring. Group the components according to their
    conflict-graph colours. Since adjacent components receive different colours,
    each group induces a disjoint union of triangle-free \((q-2)\)-regular
    components, and hence is itself triangle-free and \((q-2)\)-regular. These
    groups form the ordered near-partition required by
    \cref{thm:odd-partition}, so they determine an extremal pair-defensive
    \(q\)-colouring.
    
    Finally, \eqref{eq:odd-size-identity} gives
    \[
    |B_i|\le \frac{2^{d-1}}{q-1}
    =
    \frac{2^d}{d+1}
    \]
    for every block \(B_i\). Therefore each connected component has at most this
    many vertices. Since the components cover all \(2^{d-1}\) vertices of the
    defender parity, their number is at least
    \[
    \frac{2^{d-1}}{2^{d-1}/(q-1)}
    =
    q-1.
    \]
\end{proof}

The same numerical bound can also be obtained from Hoffman's bound. Indeed,
the least eigenvalue of the odd halved cube is \(-(q-2)\), which is the
negative of the induced degree of each block. The spectral ratio bound
therefore gives exactly
\[
|B|\le \frac{2^d}{d+1}.
\]
This is equivalent to the nonnegativity condition
\[
|A_i|\ge0
\]
in \eqref{eq:odd-size-identity}, and so it gives no additional obstruction.

\section{Local matching and defect geometry}
\label{sec:local-geometry}

The partition theorem can also be expressed in coordinates. Let \(B\) be a
nonempty defender block. For \(b\in B\), define the \emph{link} of \(b\) in
\(B\) to be the graph with vertex set \([d]\) and edge set
\begin{equation}
    \label{eq:link}
    L_b
    =
    \bigl\{
    \{j,\ell\}\subseteq[d]:
    j\ne \ell
    \text{ and }
    b+e_j+e_\ell\in B
    \bigr\}.
\end{equation}
Thus an edge \(\{j,\ell\}\) in \(L_b\) records that \(B\) contains the
halved-cube neighbour of \(b\) obtained by flipping coordinates \(j\) and
\(\ell\).

A coordinate pair always means a two-element subset of \([d]\). A
\emph{near-perfect matching} of \([d]\) is a matching that covers all but one
coordinate. The next lemma shows that every link is such a matching. We call
the unique uncovered coordinate the \emph{defect coordinate}.

\begin{lemma}[Local defect]
    \label{lem:local-defect}
    Let \(B\) induce a triangle-free \((q-2)\)-regular subgraph of the halved
    \((2q-3)\)-cube. For every \(b\in B\), the link \(L_b\) is a near-perfect
    matching of \([d]\). Equivalently, there is a unique coordinate
    \(\delta(b)\in[d]\) such that \(L_b\) is a perfect matching of
    \[
    [d]\setminus\{\delta(b)\}.
    \]
    Moreover, for every \(j\in[d]\),
    \begin{equation}
        \label{eq:local-count}
        |N(b+e_j)\cap B|=1+\deg_{L_b}(j).
    \end{equation}
    Thus \(b+e_{\delta(b)}\) is the unique attacked neighbour of \(b\) whose only
    \(B\)-defender is \(b\).
\end{lemma}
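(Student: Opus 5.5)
We argue directly from the hypotheses on \(B\), without invoking the colouring. Fix \(b\in B\); then \(b\) lies in the defender parity and its cube neighbours are the attacked vertices \(b+e_j\), \(j\in[d]\). First we establish \eqref{eq:local-count}. The vertices of \(B\) adjacent to \(b+e_j\) are \(b\) itself together with the vertices \(b+e_j+e_\ell\), \(\ell\ne j\), that lie in \(B\). By the definition \eqref{eq:link} of the link, the latter correspond exactly to the edges of \(L_b\) at \(j\). This gives \(|N(b+e_j)\cap B|=1+\deg_{L_b}(j)\).

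Next we show that \(L_b\) has maximum degree at most one. If \(j\) had two link-neighbours \(\ell,\ell'\), then \(b\), \(b+e_j+e_\ell\) and \(b+e_j+e_{\ell'}\) would be three vertices of \(B\) with the common cube neighbour \(b+e_j\). As noted in the proof of \cref{thm:odd-partition}, three vertices of one parity class with a common neighbour are pairwise at distance two, so they form a triangle in \(H_d[B]\). This contradicts triangle-freeness, so \(L_b\) is a matching.

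Finally we count edges. The halved-cube neighbours of \(b\) inside \(B\) are the vertices \(b+e_j+e_\ell\in B\), and distinct unordered pairs \(\{j,\ell\}\) give distinct vertices. Hence \(|L_b|=\deg_{H_d[B]}(b)=q-2\). A matching with \(q-2\) edges covers \(2q-4=d-1\) of the \(d\) coordinates, so exactly one coordinate \(\delta(b)\) is uncovered, and \(L_b\) is a perfect matching of \([d]\setminus\{\delta(b)\}\). This coordinate is unique because a matching covers each vertex at most once.

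For the last sentence, \eqref{eq:local-count} gives \(|N(b+e_j)\cap B|=1\) precisely when \(\deg_{L_b}(j)=0\), that is, when \(j=\delta(b)\). For every other \(j\) the count is \(2\). So \(b+e_{\delta(b)}\) is the unique attacked neighbour of \(b\) whose only \(B\)-defender is \(b\). There is no real obstacle in this argument; the only point needing care is the triangle step, where one must check that the three vertices really are pairwise at Hamming distance two. This holds because they are distinct and all adjacent to \(b+e_j\).
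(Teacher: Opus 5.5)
Your proof is correct and follows essentially the same route as the paper: you read off \eqref{eq:local-count} from the definition of the link, rule out two link edges at a common coordinate because they would give a triangle in \(H_d[B]\), and use \(|L_b|=q-2\) to conclude that exactly \(d-1\) coordinates are covered. The differences are cosmetic: you prove the steps in a different order, and you certify the triangle via the common neighbour \(b+e_j\), whereas the paper checks directly that \(b+e_j+e_\ell\) and \(b+e_j+e_{\ell'}\) differ in two coordinates.
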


\begin{proof}
    The edges of \(L_b\) correspond exactly to the neighbours of \(b\) inside the
    induced halved-cube subgraph \(H_d[B]\). Since this subgraph is
    \((q-2)\)-regular, the link \(L_b\) has \(q-2\) edges.
    
    We claim that \(L_b\) is a matching. If two edges of \(L_b\) shared a
    coordinate, say
    \[
    \{j,\ell\}
    \quad\text{and}\quad
    \{j,m\},
    \]
    then \(B\) would contain both
    \[
    b+e_j+e_\ell
    \quad\text{and}\quad
    b+e_j+e_m.
    \]
    These two vertices differ in the two coordinates \(\ell\) and \(m\), so they
    are adjacent in the halved cube. Together with \(b\), they form a triangle in
    \(H_d[B]\), contradicting triangle-freeness.
    
    Thus \(L_b\) is a matching with \(q-2\) edges. It covers
    \[
    2(q-2)=2q-4=d-1
    \]
    coordinates, so exactly one coordinate is left uncovered. This is the defect
    coordinate \(\delta(b)\).
    
    Finally, fix \(j\in[d]\). The vertices of \(B\) adjacent in \(Q_d\) to
    \(b+e_j\) are precisely \(b\) itself and the vertices of the form
    \[
    b+e_j+e_\ell
    \]
    for which \(\{j,\ell\}\) is an edge of \(L_b\). Hence
    \[
    |N(b+e_j)\cap B|=1+\deg_{L_b}(j),
    \]
    as claimed.
\end{proof}

The coordinate-matching idea has clear precedents. The proof of Lemma~5.2 of
\emph{Silver Cubes} derives matchings, together with a unique unsaturated
coordinate, from a proper edge-colouring of a coordinate complete graph
\cite{ghebleh2008silver}. Similarly, the proof of Theorem~7 of
Krotov--Potapov gives a pointwise perfect matching for even unitrades
\cite{krotov2021multifold}. In the present setting, the new feature is the
role of the odd defect: it identifies the unique attack for which \(b\) is
the sole defender from its block, and this local fact has the global
consequences developed below.

\begin{proposition}[Singleton matching and coherence]
    \label{prop:coherence}
    For an extremal colouring of \(Q_{2q-3}\), the following hold.
    
    \begin{enumerate}[label=\textup{(\alph*)}]
        \item For \(b\in D\), define
        \[
        \psi(b)=b+e_{\delta(b)}.
        \]
        Then \(\psi:D\to I\) is a bijection. Thus the defect edges
        \[
        b\psi(b),
        \qquad b\in D,
        \]
        form a perfect matching between the two parity classes.
        
        \item For \(x\in I\), define
        \[
        \delta(x)=\delta(\psi^{-1}(x)).
        \]
        Also define
        \[
        \Lambda_x
        =
        \bigl\{
        \{j,\ell\}\subseteq[d]:
        j\ne \ell
        \text{ and }
        c(x+e_j)=c(x+e_\ell)
        \bigr\}.
        \]
        Then \(\Lambda_x\) is a perfect matching of
        \[
        [d]\setminus\{\delta(x)\}.
        \]
        
        \item Adjoin one auxiliary coordinate \(*\). For each vertex \(v\in V(Q_d)\),
        define
        \[
        \Pi_v^+
        =
        \begin{cases}
            L_v\cup\{\{\delta(v),*\}\}, & v\in D,\\[2mm]
            \Lambda_v\cup\{\{\delta(v),*\}\}, & v\in I.
        \end{cases}
        \]
        Then \(\Pi_v^+\) is a perfect matching of the enlarged coordinate set
        \[
        [d]\cup\{*\}.
        \]
        Let \(\pi_v(j)\) denote the mate of \(j\) in this matching. Then, for every
        \(v\in\mathbb F_2^d\) and every \(j\in[d]\),
        \begin{equation}
            \label{eq:coherence}
            \pi_{v+e_j}(j)=\pi_v(j).
        \end{equation}
    \end{enumerate}
\end{proposition}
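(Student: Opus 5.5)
Part (a) follows from \cref{lem:local-defect} combined with \cref{lem:equality-profile}. Fix \(b\in D\) and let \(i=c(b)\), so \(b\in B_i\). By \cref{thm:odd-partition}, \(B_i\) induces a triangle-free \((q-2)\)-regular subgraph of \(H_d\), so \cref{lem:local-defect} applies. It shows that \(\psi(b)=b+e_{\delta(b)}\) is the unique attacked neighbour of \(b\) for which \(b\) is the only colour-\(i\) defender in the open neighbourhood. Since \(\psi(b)\) has some other colour, \(b\) is also its only colour-\(i\) defender in the closed neighbourhood. Hence \(\psi(b)\) is exactly the attacked vertex matched to \(b\) by the singleton defender matching of \cref{lem:equality-profile}. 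That matching is a bijection \(I\to D\), so \(\psi\) is its inverse and is a bijection \(D\to I\).

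For (b), fix \(x\in I\) and let \(b=\psi^{-1}(x)\), so \(x=b+e_{\delta(b)}\). The open profile of \(x\) is \((0,1,2,\ldots,2)\). Exactly one neighbour, \(b\), has a colour appearing once; every other colour present appears on exactly two neighbours. The relation ``same colour'' on \(N(x)\) therefore pairs up all neighbours except \(b\). Equivalently, \(\Lambda_x\) is a perfect matching of \([d]\setminus\{j_0\}\), where \(b=x+e_{j_0}\). Since \(b=x+e_{\delta(b)}\), we get \(j_0=\delta(b)=\delta(x)\).

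For (c), the perfect-matching claim is immediate from (a) and (b): adjoining \(\{\delta(v),*\}\) covers the one missing coordinate. The substance is the coherence identity \eqref{eq:coherence}. Since \(v\) and \(w=v+e_j\) lie in opposite parities, I will assume by symmetry of roles that \(v\in D\) and \(x=w\in I\). The identity is symmetric under swapping \(v\) and \(v+e_j\), so this loses nothing.

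The plan is to split into cases according to the mate of \(j\) in \(\Pi_v^+\).
\begin{enumerate}[label=\textup{(\roman*)}]
\item Suppose \(\pi_v(j)=\ell\in[d]\), meaning \(\{j,\ell\}\in L_v\), so \(v+e_j+e_\ell\in B_{c(v)}\). Then the neighbours \(x+e_j=v\) and \(x+e_\ell=v+e_j+e_\ell\) of \(x\) share a colour. Hence \(\{j,\ell\}\in\Lambda_x\), giving \(\pi_x(j)=\ell\).
\item Suppose \(\pi_v(j)=*\), meaning \(j=\delta(v)\). Then \(x=\psi(v)\), so \(\delta(x)=\delta(v)=j\) by definition, and \(\pi_x(j)=*\).
\end{enumerate}
These two cases cover every \(j\), since \(\Pi_v^+\) is a perfect matching of \([d]\cup\{*\}\). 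No converse direction is needed because \(\pi_x\) is a function.

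The main obstacle is step (i), and specifically the converse content it hides. One must check that colour equality of \(v\) and \(v+e_j+e_\ell\) in \(D\) is the same as membership of both in the same block. This holds by definition of \(B_i\). One must also check that the matching \(\Lambda_x\) is consistent with \(L_v\) even when the pair \(\{j,\ell\}\) arises from a neighbour other than \(v\). This is automatic, because a matching edge containing \(j\) is unique.
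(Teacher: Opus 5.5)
Your proposal is correct and follows essentially the same route as the paper: (a) combines \cref{lem:local-defect} with the singleton matching of \cref{lem:equality-profile}, (b) reads off $\Lambda_x$ from the open profile $(0,1,2,\ldots,2)$, and (c) uses the same two-case analysis on the mate of $j$ at the defender endpoint. Your observation that no reverse direction is needed is also correct, because the identity is symmetric in the two endpoints and your case split at $v\in D$ already covers every $j$; this makes the paper's ``read in reverse'' step redundant.
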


\begin{proof}
    By \cref{lem:local-defect}, every defender \(b\in D\) is the unique
    same-colour defender of exactly one attacked vertex, namely
    \[
    b+e_{\delta(b)}.
    \]
    By \cref{lem:equality-profile}, every attacked vertex has exactly one
    singleton defender in \(D\). These two facts prove that
    \[
    \psi(b)=b+e_{\delta(b)}
    \]
    is a bijection from \(D\) to \(I\), proving \textup{(a)}.
    
    Now fix \(x\in I\). By the equality profile, the \(d=2q-3\) neighbours of
    \(x\) have one colour appearing once and \(q-2\) colours appearing twice.
    The singleton neighbour is \(\psi^{-1}(x)\), and its coordinate is
    \(\delta(x)\). Therefore, after removing the coordinate \(\delta(x)\), the
    remaining \(d-1=2(q-2)\) coordinates split into pairs according to which
    neighbours of \(x\) have the same colour. Hence \(\Lambda_x\) is a perfect
    matching of
    \[
    [d]\setminus\{\delta(x)\},
    \]
    proving \textup{(b)}.
    
    It remains to prove the coherence condition. Let \(v\in D\), write
    \(v=b\), and let
    \[
    x=b+e_j\in I.
    \]
    Suppose first that \(j\) is paired with \(\ell\) in \(L_b\). Then
    \[
    b+e_j+e_\ell\in B_i,
    \]
    where \(b\in B_i\). Thus the two vertices
    \[
    b
    \quad\text{and}\quad
    b+e_j+e_\ell
    \]
    are the two \(B_i\)-neighbours of \(x\). Hence the two neighbours
    \(x+e_j=b\) and \(x+e_\ell=b+e_j+e_\ell\) have the same colour, so \(j\) is
    paired with \(\ell\) in \(\Lambda_x\).
    
    If instead \(j=\delta(b)\), then \(b\) is the unique \(B_i\)-neighbour of
    \(x\). Thus \(j\) is paired with the auxiliary coordinate \(*\) at both
    endpoints:
    \[
    \pi_b(j)=*
    =
    \pi_x(j).
    \]
    
    The same arguments, read in reverse, show that if the partner of \(j\) is
    specified at \(x\), then it is the same at \(b\). Therefore the two endpoints
    of the edge in direction \(j\) agree on the mate of \(j\), that is,
    \[
    \pi_{v+e_j}(j)=\pi_v(j)
    \]
    for every \(v\in\mathbb F_2^d\) and every \(j\in[d]\). This proves
    \textup{(c)}.
\end{proof}

Equation~\eqref{eq:coherence} says that the two endpoints of a cube edge
agree on the partner of the edge direction. In particular, for a pair
\[
P=\{j,\ell\}\subseteq [d]\cup\{*\},
\]
the set
\[
\{v:P\in\Pi_v^+\}
\]
is invariant under translation by \(e_j\) and by \(e_\ell\), when both
\(j,\ell\in[d]\). If \(\ell=*\), it is invariant under translation by
\(e_j\). No further consistency condition around closed walks is required,
because the perfect matching \(\Pi_v^+\) is already defined independently at
each vertex \(v\).

\begin{lemma}[Defect congruences]
    \label{lem:defect-congruence}
    Let \(B\) induce a triangle-free \((q-2)\)-regular subgraph of the halved
    \((2q-3)\)-cube. For a coordinate \(j\in[d]\), put
    \[
    B_j^e=\{v\in B:v_j=e\},
    \qquad e\in\{0,1\},
    \]
    and define
    \[
    n_j^e
    =
    |\{v\in B_j^e:\delta(v)=j\}|,
    \qquad
    n_j=n_j^0+n_j^1.
    \]
    Then
    \[
    n_j\equiv |B|\pmod 2.
    \]
    If \(q\) is even, then
    \[
    n_j\equiv |B|\pmod 4,
    \qquad
    n_j^e\equiv |B_j^e|\pmod 2
    \quad(e\in\{0,1\}).
    \]
    If \(q\) is odd, then both \(n_j^0\) and \(n_j^1\) are even.
\end{lemma}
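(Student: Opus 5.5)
The plan is to exploit the local structure from \cref{lem:local-defect}: at each \(v\in B\) the link \(L_v\) is a perfect matching of \([d]\setminus\{\delta(v)\}\). Fix the coordinate \(j\). The vertices of \(B\) then split into those with \(\delta(v)=j\) and those in which \(j\) is matched to some \(\ell\) in \(L_v\). Two double-counting arguments, both relative to the coordinate \(j\), should give all the congruences.

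\emph{Step 1: an involution across direction \(j\).} Take \(v\in B\) with \(\delta(v)\neq j\), and let \(\ell\) be the mate of \(j\) in \(L_v\). Define \(\tau(v)=v+e_j+e_\ell\in B\). Applying the definition \eqref{eq:link} at \(w=\tau(v)\), we have \(w+e_j+e_\ell=v\in B\), so \(\{j,\ell\}\in L_w\). Hence \(\delta(w)\neq j\), the mate of \(j\) at \(w\) is again \(\ell\), and \(\tau(w)=v\). Thus \(\tau\) is a fixed-point-free involution on \(\{v\in B:\delta(v)\neq j\}\). Moreover, it flips coordinate \(j\), so it exchanges the non-defect parts of \(B_j^0\) and \(B_j^1\). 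Writing \(t\) for the common size of these parts, we get
\[
|B_j^0|-n_j^0=|B_j^1|-n_j^1=t,\qquad |B|-n_j=2t .
\]
This already gives \(n_j\equiv|B|\pmod 2\).

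\emph{Step 2: a handshake count inside one half.} Fix \(e\in\{0,1\}\) and count the edges of \(H_d[B_j^e]\). For \(v\in B_j^e\), its neighbours inside \(B_j^e\) correspond exactly to the edges of \(L_v\) not containing \(j\). There are \(q-2\) such edges if \(\delta(v)=j\), and \(q-3\) otherwise. By the handshake lemma,
\[
(q-2)\,n_j^e+(q-3)\bigl(|B_j^e|-n_j^e\bigr)\equiv 0\pmod 2,
\quad\text{that is,}\quad n_j^e+(q-3)|B_j^e|\equiv0\pmod2 .
\]

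\emph{Step 3: split by the parity of \(q\).} If \(q\) is odd, then \(q-3\) is even and the congruence reads \(n_j^e\equiv0\pmod 2\), as claimed. If \(q\) is even, it reads \(n_j^e\equiv|B_j^e|\pmod 2\). Then \(t=|B_j^e|-n_j^e\) is even, and Step 1 upgrades to \(|B|-n_j=2t\equiv0\pmod 4\).

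The only point needing care is Step 1, specifically checking that \(\tau\) is a genuine involution, meaning the mate of \(j\) at \(\tau(v)\) is the same \(\ell\). This follows from the symmetry of the defining condition \eqref{eq:link} together with the matching property in \cref{lem:local-defect}, which makes the mate unique. It is the same edge-coherence phenomenon recorded in \eqref{eq:coherence}, now read along a halved-cube edge. The remaining steps are routine parity bookkeeping.
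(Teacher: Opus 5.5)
Your proposal is correct and follows the paper's proof essentially step for step. Your involution $\tau$ is the paper's translation by $e_j+e_\ell$, assembled over all $\ell$ at once, and your $t$ is the paper's crossing-edge count $m_j$. The handshake count inside each fibre $B_j^e$ and the split according to the parity of $q$ are the same as in the paper.
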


\begin{proof}
    Fix a coordinate \(j\). For each \(\ell\ne j\), translation by
    \(e_j+e_\ell\) maps a vertex \(v\in B\) with
    \(\{j,\ell\}\in L_v\) to the corresponding halved-cube neighbour
    \[
    v+e_j+e_\ell\in B.
    \]
    This translation is a fixed-point-free involution on the set
    \[
    \{v\in B:\{j,\ell\}\in L_v\}.
    \]
    Hence this set has even size. Summing over all \(\ell\ne j\), we find that
    the number of vertices \(v\in B\) for which \(j\) is covered by \(L_v\) is
    even. Since \(j\) is uncovered exactly when \(\delta(v)=j\), this gives
    \[
    |B|-n_j\equiv0\pmod 2,
    \]
    and therefore
    \[
    n_j\equiv |B|\pmod 2.
    \]
    
    For the refinement, let \(m_j\) be the number of edges of \(H_d[B]\) whose
    coordinate pair contains \(j\). These are exactly the edges crossing between
    the two fibres
    \[
    B_j^0
    \quad\text{and}\quad
    B_j^1.
    \]
    A vertex \(v\in B_j^e\) is incident with such a crossing edge exactly when
    \(\delta(v)\ne j\). Hence
    \[
    m_j=|B_j^0|-n_j^0=|B_j^1|-n_j^1,
    \qquad
    |B|-n_j=2m_j.
    \]
    
    Now count degrees inside one fibre \(B_j^e\). Each vertex with
    \(\delta(v)=j\) has all its \(q-2\) halved-cube neighbours inside the same
    fibre. Each vertex with \(\delta(v)\ne j\) has exactly one neighbour across
    the \(j\)-cut, and therefore \(q-3\) neighbours inside the fibre. Thus the
    degree sum inside \(B_j^e\) is
    \[
    (q-3)\bigl(|B_j^e|-n_j^e\bigr)+(q-2)n_j^e.
    \]
    This degree sum is even, so
    \[
    (q-3)\bigl(|B_j^e|-n_j^e\bigr)+(q-2)n_j^e
    \equiv 0 \pmod 2.
    \]
    Using \(m_j=|B_j^e|-n_j^e\), this is equivalently
    \[
    (q-3)m_j+(q-2)n_j^e\equiv0\pmod2.
    \]
    
    If \(q\) is even, then \(q-3\) is odd and \(q-2\) is even, so the congruence
    implies that \(m_j\) is even. From
    \[
    |B|-n_j=2m_j
    \]
    we get
    \[
    n_j\equiv |B|\pmod4.
    \]
    Also, from
    \[
    m_j=|B_j^e|-n_j^e
    \]
    and the fact that \(m_j\) is even, we obtain
    \[
    n_j^e\equiv |B_j^e|\pmod2.
    \]
    
    If \(q\) is odd, then \(q-3\) is even and \(q-2\) is odd. The same congruence
    therefore gives
    \[
    n_j^e\equiv0\pmod2
    \]
    for each \(e\in\{0,1\}\).
\end{proof}

We can also express equality in the odd critical dimension by a single local
condition.

\begin{corollary}[Local normal form]
    \label{cor:local-normal-form}
    Extremal pair-defensive \(q\)-colourings of \(Q_{2q-3}\) are in bijection
    with maps
    \[
    c_D:D\to[q]
    \]
    such that, for every \(b\in D\), the set
    \[
    \bigl\{
    \{j,\ell\}\subseteq[d]:
    j\ne \ell
    \text{ and }
    c_D(b+e_j+e_\ell)=c_D(b)
    \bigr\}
    \]
    is a near-perfect matching of \([d]\).
    
    For such a map \(c_D\), the colours on \(I\), the defect matching between
    \(D\) and \(I\), and all pair-Hall inequalities are forced.
\end{corollary}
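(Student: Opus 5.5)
The plan is to reduce the statement to \cref{thm:odd-partition}. That theorem already gives a bijection between extremal colourings and ordered near-partitions \(B_1,\ldots,B_q\) of \(D\) in which every nonempty block induces a triangle-free \((q-2)\)-regular subgraph of \(H_d\). A map \(c_D:D\to[q]\) is the same thing as an ordered near-partition, via \(B_i=c_D^{-1}(i)\), with empty blocks allowed. So it suffices to prove the following equivalence, for a block \(B\) and the links \(L_b\) of \eqref{eq:link}: \(H_d[B]\) is triangle-free and \((q-2)\)-regular if and only if \(L_b\) is a near-perfect matching of \([d]\) for every \(b\in B\). The set in the statement, evaluated at \(b\in B_i\), is exactly \(L_b\) computed in the block \(B_i\). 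Note also that the condition quantifies only over \(b\in D\), so empty blocks impose no constraint, matching the convention of \cref{thm:odd-partition}.

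The forward direction is \cref{lem:local-defect}. For the converse, assume every \(L_b\) is a near-perfect matching of \([d]\), \(d=2q-3\).

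\textbf{Regularity.} A near-perfect matching on \(2q-3\) points has \(q-2\) edges. Edges of \(L_b\) correspond bijectively to \(H_d[B]\)-neighbours \(b+e_j+e_\ell\) of \(b\), so \(\deg_{H_d[B]}(b)=q-2\).

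\textbf{Triangle-freeness.} This is the step needing care. Suppose \(b,u,w\in B\) are pairwise at distance two. In the cube, three vertices of one parity pairwise at distance two either have the form \(b,\ b+e_j+e_\ell,\ b+e_j+e_m\) (a common neighbour \(b+e_j\)), or have the form \(b,\ b+e_j+e_\ell,\ b+e_\ell+e_m\) type configurations. I will check the possible difference patterns directly. Writing \(u=b+e_j+e_\ell\) and \(w=b+e_a+e_c\), the condition \(\dist(u,w)=2\) forces \(\{a,c\}\cap\{j,\ell\}\) to have exactly one element. In every case, then, the links \(\{j,\ell\}\) and \(\{a,c\}\) at \(b\) share a coordinate. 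This contradicts \(L_b\) being a matching.

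With the equivalence established, \cref{thm:odd-partition} yields the bijection. The "forced" clauses follow from results already proved. The colour on \(I\) is the unique missing colour (\cref{thm:odd-partition}). The defect matching is \(\psi\) from \cref{prop:coherence}(a). Pair Hall was verified in the converse half of \cref{thm:odd-partition}.

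The main obstacle is only the case analysis showing that every triangle in the halved cube has a vertex at which two links share a coordinate. I expect that this, rather than the regularity count, is where care is needed.
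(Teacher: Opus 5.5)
Your proposal is correct and follows the paper's proof: reduce to \cref{thm:odd-partition} via $B_i=c_D^{-1}(i)$, cite \cref{lem:local-defect} for the forward direction, and for the converse get $(q-2)$-regularity from the edge count $(d-1)/2=q-2$ and triangle-freeness from the fact that two link pairs at $b$ whose endpoints lie at distance two must share exactly one coordinate. The case analysis you flag as the main obstacle is just this one-line symmetric-difference check, and your two-configuration preamble can be dropped, since up to relabelling every halved-cube triangle through $b$ is $b,\ b+e_j+e_\ell,\ b+e_j+e_m$. Your explicit sourcing of the forced clauses from \cref{thm:odd-partition} and \cref{prop:coherence}(a) is a harmless addition.
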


\begin{proof}
    The forward implication is exactly \cref{lem:local-defect}, applied to each
    defender block.
    
    Conversely, suppose \(c_D:D\to[q]\) satisfies the local matching condition.
    For each colour \(i\), let
    \[
    B_i=c_D^{-1}(i).
    \]
    These sets form an ordered near-partition of \(D\), with empty blocks
    allowed. The local matching condition says that every vertex in a nonempty
    block \(B_i\) has exactly \(q-2\) neighbours in \(H_d[B_i]\), because a
    near-perfect matching of \([d]\) has
    \[
    \frac{d-1}{2}=q-2
    \]
    edges.
    
    It also implies that \(H_d[B_i]\) is triangle-free. Indeed, if two
    halved-cube neighbours of \(b\in B_i\) inside \(B_i\) were adjacent to each
    other, then the corresponding two coordinate pairs would share one
    coordinate. This would contradict the fact that the link at \(b\) is a
    matching.
    
    Thus every nonempty \(H_d[B_i]\) is triangle-free and
    \((q-2)\)-regular. By \cref{thm:odd-partition}, the ordered near-partition
    extends uniquely to an extremal pair-defensive colouring of \(Q_{2q-3}\).
\end{proof}

We call an extremal colouring \emph{homogeneous} if the link \(L_b\) is the
same for every \(b\in D\). The family
\[
\{L_b:b\in D\}
\]
is called the \emph{local matching field}. In particular, homogeneity forces
the defect coordinate to be constant. The next normal form describes all
constant-defect colourings, without assuming that the whole local matching
field is constant.

\begin{proposition}[Constant-defect reduction]
    \label{prop:constant-defect}
    Suppose that, in an extremal colouring of \(Q_{2q-3}\), the defect coordinate
    is constant:
    \[
    \delta(b)=j_0
    \qquad\text{for every } b\in D.
    \]
    Deleting coordinate \(j_0\) identifies the parity class \(D\) with
    \(\mathbb F_2^{2q-4}\). Under this identification, such colourings correspond
    to maps
    \[
    \gamma:\mathbb F_2^{2q-4}\to[q]
    \]
    satisfying the following two conditions for every \(v\in\mathbb F_2^{2q-4}\):
    
    \begin{enumerate}[label=\textup{(\roman*)}]
        \item \(\gamma(v+e_j)\ne\gamma(v)\) for every remaining coordinate \(j\);
        
        \item the set
        \[
        \bigl\{
        \{j,\ell\}:
        j\ne \ell
        \text{ and }
        \gamma(v+e_j+e_\ell)=\gamma(v)
        \bigr\}
        \]
        is a perfect matching of the \(2q-4\) remaining coordinates.
    \end{enumerate}
\end{proposition}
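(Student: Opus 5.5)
Via the local normal form of \cref{cor:local-normal-form}, an extremal colouring is the same thing as a defender-side map $c_D:D\to[q]$ whose links are all near-perfect matchings of $[d]$. So the plan is to translate that link condition through the coordinate deletion and check that it becomes conditions (i) and (ii). Let $d=2q-3$ and let $\rho:D\to\mathbb F_2^{2q-4}$ delete coordinate $j_0$. Since every $b\in D$ has odd weight, $b_{j_0}$ is determined by the remaining coordinates, so $\rho$ is a bijection. Flipping a pair $\{j,\ell\}$ with $j,\ell\ne j_0$ corresponds under $\rho$ to $v\mapsto v+e_j+e_\ell$. Flipping a pair $\{j,j_0\}$ corresponds to $v\mapsto v+e_j$. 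Put $\gamma=c_D\circ\rho^{-1}$.

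\emph{Forward direction.} Assume $\delta(b)=j_0$ for all $b$. Then at every $b$ the link $L_b$ is a perfect matching of $[d]\setminus\{j_0\}$, and in particular no pair $\{j,j_0\}$ lies in $L_b$. This says $c_D(b+e_j+e_{j_0})\ne c_D(b)$ for all $j\ne j_0$, which is exactly condition (i) for $\gamma$. The remaining link edges are the pairs $\{j,\ell\}$ avoiding $j_0$ with $c_D(b+e_j+e_\ell)=c_D(b)$. Under $\rho$ these are precisely the pairs in condition (ii), and they form a perfect matching of the $2q-4$ remaining coordinates.

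\emph{Converse.} Given $\gamma$ satisfying (i) and (ii), define $c_D=\gamma\circ\rho$. Condition (i) says no link pair at $b$ contains $j_0$. Condition (ii) says the pairs avoiding $j_0$ form a perfect matching of $[d]\setminus\{j_0\}$. Together these say $L_b$ is a near-perfect matching of $[d]$ with uncovered coordinate $j_0$. By \cref{cor:local-normal-form}, $c_D$ extends uniquely to an extremal colouring. By \cref{lem:local-defect}, its defect at every $b$ is $j_0$, so the defect is constant.

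\emph{Bijection.} The two constructions are mutually inverse because $\rho$ is a bijection and the colours on $I$ are forced (\cref{thm:odd-partition}). One also has to check that surjectivity of the full colouring is automatic, which the partition theorem already ensures.

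\emph{Main obstacle.} I expect the only delicate step to be the bookkeeping: checking that flips involving $j_0$ correspond exactly to single-coordinate flips in $\mathbb F_2^{2q-4}$, and that pairs avoiding $j_0$ correspond to double flips with the $j_0$-coordinate unchanged. Both follow from the parity constraint on $D$.
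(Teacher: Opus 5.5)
Your proposal is correct and follows essentially the same route as the paper: delete coordinate $j_0$, note that link pairs $\{j,j_0\}$ become single flips (giving (i)) while pairs avoiding $j_0$ stay double flips (giving (ii)), and invoke \cref{cor:local-normal-form} for the converse. Two points the paper leaves implicit are made explicit in your version: the parity argument that the deletion map $\rho$ is a bijection, and the check via \cref{lem:local-defect} that the reconstructed colouring has constant defect $j_0$.
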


\begin{proof}
    Assume first that the defect coordinate is constant and equal to \(j_0\).
    Since \(j_0\) is the uncovered coordinate in every link, no link edge contains
    \(j_0\). Therefore all halved-cube edges inside a defender block avoid the
    coordinate \(j_0\). In particular, each connected component of a block lies
    entirely inside one \(j_0\)-fibre.
    
    Delete the coordinate \(j_0\), and identify \(D\) with
    \(\mathbb F_2^{2q-4}\). A halved-cube edge whose coordinate pair avoids
    \(j_0\) remains a distance-two edge after deletion. A halved-cube edge whose
    coordinate pair contains \(j_0\) becomes a distance-one edge after deletion.
    
    Now let \(\gamma\) be the induced colouring on
    \(\mathbb F_2^{2q-4}\). If for some remaining coordinate \(j\) we had
    \[
    \gamma(v+e_j)=\gamma(v),
    \]
    then, before deleting \(j_0\), these two vertices would differ in the two
    coordinates \(j_0\) and \(j\). Hence they would be adjacent in the original
    halved cube and would lie in the same defender block. This would give a block
    edge using \(j_0\), contradicting the fact that \(j_0\) is the defect
    coordinate everywhere. Thus condition \textup{(i)} holds.
    
    Condition \textup{(ii)} is exactly the link condition after deleting
    \(j_0\): the \(q-2\) link pairs at each defender cover all
    \[
    2q-4
    \]
    remaining coordinates and form a perfect matching.
    
    Conversely, suppose that
    \[
    \gamma:\mathbb F_2^{2q-4}\to[q]
    \]
    satisfies \textup{(i)} and \textup{(ii)}. Reinsert the deleted coordinate
    \(j_0\) to recover the defender parity \(D\). Condition \textup{(i)} ensures
    that no same-colour halved-cube edge uses \(j_0\). Condition \textup{(ii)}
    then gives, at every defender, a near-perfect link whose unique uncovered
    coordinate is \(j_0\). By \cref{cor:local-normal-form}, this defender-side
    colouring extends uniquely to an extremal pair-defensive colouring of
    \(Q_{2q-3}\).
\end{proof}

\section{Conclusion}

Pair defense requires each colour class to supply distinct nearby defenders
for any two attacked vertices. On hypercubes this limits the number of
colours to at most $\lfloor(d+3)/2\rfloor$, and the constructions here attain
that bound in four consecutive dimensions around every power of two. In the
odd critical dimension $d=2q-3$, attaining the bound is equivalent to an
ordered near-partition of the defender parity into blocks inducing
triangle-free $(q-2)$-regular subgraphs of the halved cube. The local matching
form and the constant-defect reduction give two ways to study such partitions.

\begingroup
\small
\setlength{\parskip}{0pt}
\bibliographystyle{plain}
\bibliography{references}
\endgroup

\end{document}